\documentclass[a4paper]{amsart}
\usepackage{amsmath,amstext,amssymb,amsfonts,amscd,amsthm}
\usepackage[numbers,sort&compress]{natbib}
\usepackage{mathdots,mathrsfs,enumerate}
\usepackage{extarrows}
\usepackage{mathrsfs}  
\usepackage{dsfont}  
\usepackage{graphicx,color}
\usepackage{tikz}
\usepackage{stfloats}
\usetikzlibrary{3d,calc,patterns,graphs,arrows}
\usepackage{tikz-cd,array,diagbox}
\usepackage{changepage}
\usepackage{geometry}
\usepackage[CJKbookmarks=true,unicode,colorlinks,linkcolor=blue,anchorcolor=blue,citecolor=blue]{hyperref}
\usepackage{subfigure}
\numberwithin{equation}{section}

\usepackage{xcolor}
\definecolor{BeanPasteGreen}{RGB}{200,232,200}
\definecolor{WaterBlue}{RGB}{185,220,237}

\newtheorem{lemma}{Lemma}[section]
\newtheorem{theorem}{Theorem}[section]

\newtheorem{remark}{Remark}[section]

\newenvironment{proof*}
\numberwithin{equation}{section}

\newcommand{\op}[1]{\operatorname{#1}} 

\newcommand{\p}{\partial}

\renewcommand{\t}{\triangle}

\newcommand{\abs}[1]{\left\vert#1\right\vert}
\newcommand{\norm}[1]{\left\Vert#1\right\Vert}
\newcommand{\set}[1]{\left\{#1\right\}}

\newcommand{\frb}[1]{\left(#1\right)}

\newcommand{\ol}{\overline}
\newcommand{\ul}{\underline}

\newcommand{\ra}{\rightarrow}

\newcommand{\Z}{\mathbb Z}

\newcommand{\R}{\mathbb R}

\renewcommand{\a}{\alpha}
\renewcommand{\b}{\beta}
\newcommand{\g}{\gamma}
\providecommand{\G}{}
\renewcommand{\G}{\Gamma}
\renewcommand{\d}{\delta}
\newcommand{\D}{\Delta}

\newcommand{\ve}{\varepsilon}

\renewcommand{\l}{\lambda}

\newcommand{\s}{\sigma}

\newcommand{\vp}{\varphi}

\newcommand{\Om}{\Omega}

\allowdisplaybreaks[3]

\begin{document}

\title[Interior Hessian estimates for Hessian quotient equations]
{Interior Hessian estimates for $k$-convex solutions
of Hessian quotient equations with $2k>n$}

\author{Ke Wu}
\address{School of Mathematics and Center for Nonlinear Studies,
Northwest University, Xi'an, 710127, People's Republic of China}
\email{wuke@med.nwu.edu.cn}

\date{September 25, 2026.}

\begin{abstract}
We establish interior Hessian estimates for $k$-convex solutions of
$\frac{\s_k(D^2u)}{\s_\ell(D^2u)}=1$,
where $1\leq\ell<k<n$, $k-\ell\in\{1,2\}$ and $2k>n$.
The proof combines a concavity inequality
with a pointwise doubling argument.
\end{abstract}

\maketitle
\tableofcontents

Keywords:
Hessian quotient equations,
interior Hessian estimates,
$k$-convexity,
concavity inequalities,
doubling inequalities

2020 Mathematics Subject Classification:
Primary:
35B45; 
Secondary:
35B65, 
35J60.  

\section{Introduction}\label{sec.introduction}

In this paper, we study interior Hessian estimates for the Hessian quotient equations
\begin{equation}\label{eqn.sigma-quotient}
F(D^2u):=\frac{\s_k(D^2u)}{\s_\ell(D^2u)}=1
\end{equation}
in dimension $n\geq2$, where $1\leq\ell<k\leq n-1$ and $k-\ell\in\{1,2\}$.
Here $\s_j(D^2u):=\s_j(\l(D^2u))$ denotes the $j$-th
elementary symmetric function of the eigenvalue-vector $\l(D^2u)$ of $D^2u$.

Following the classical theory of Caffarelli--Nirenberg--Spruck \cite{CNS85},
the natural elliptic branch for the $k$-Hessian operator is the G\aa rding cone
\[
\G_k:=\set{\l\in\R^n:\s_j(\l)>0\ \hbox{for all}\ 1\leq j\leq k}.
\]
The Hessian quotient operator $F$ is elliptic on $\G_k$.
In this setting, a function $u\in C^2(\Om)$ is called $k$-convex
if $\l(D^2u(x))\in\G_k$ for any $x\in\Om$.

\begin{theorem}\label{thm.Hessian-estimate}
Let $1\leq\ell<k\leq n-1$, $k-\ell\in\{1,2\}$ and $2k>n$.
Suppose that $u\in C^4(B_1)$ is a $k$-convex solution of
\eqref{eqn.sigma-quotient} in $B_1\subset\R^n$.
Then
\[
|D^2u(0)|\leq C\frb{n,k,\|u\|_{C^1(B_1)}}.
\]

From the interior gradient estimate for Hessian quotient equations by Chen \cite{C15},
we further obtain for $k$-convex solutions in $B_2$ that
\begin{equation}\label{eqn.Hessian-L-infty}
|D^2u(0)|
\leq C\frb{n,k,\norm{u}_{L^\infty(B_2)}}.
\end{equation}
\end{theorem}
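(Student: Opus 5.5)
The plan is a Pogorelov/Guan--Qiu type maximum principle argument applied to a test function built from the largest eigenvalue. Unlike the sharp interior estimates, the cutoff does not need to vanish at the boundary to a high power, because a pointwise doubling argument will absorb the loss. Write $F(D^2u)=\s_k/\s_\ell$, $F^{ij}=\partial F/\partial u_{ij}$, and let $\l_1\geq\cdots\geq\l_n$ be the eigenvalues of $D^2u$. Since $F$ is concave on $\G_k$, we have $F^{ij}u_{ij}\ge0$-type relations. Moreover, $\sum F^{ii}\geq c(n,k)>0$ on the level set $\{F=1\}$, which follows from the Maclaurin-type inequality $\s_k/\s_\ell\le C\s_1^{k-\ell}$.

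\textbf{Test function and first computation.} At the maximum point of
\[
P=\log\l_1+\tfrac{a}{2}|Du|^2+\beta\log\eta(x),
\]
with $\eta$ a cutoff on $B_r(x_0)$, we differentiate the equation twice. This gives $F^{ii}u_{11ii}\geq -F^{pq,rs}u_{pq1}u_{rs1}$ after commuting derivatives. The term $a F^{ii}u_{ii}^2$ coming from the gradient term supplies a good quantity of size roughly $a F^{11}\l_1^2$. The needed inequality is then a concavity estimate of the form
\[
-F^{pq,rs}u_{pq1}u_{rs1}+2\sum_{i>1}\frac{F^{ii}u_{11i}^2}{\l_1-\l_i}-(1+\epsilon)\frac{F^{11}u_{111}^2}{\l_1}-\frac{F^{ii}u_{11i}^2}{\l_1}\ \geq\ -\text{(controllable)},
\]
holding whenever $\l_1$ is large.

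\textbf{Role of $2k>n$ and $k-\ell\le2$.} These hypotheses enter exactly in proving the concavity inequality, and this is the main obstacle. For $\l\in\G_k$ with $2k>n$, at most $n-k<k$ eigenvalues are negative. This yields a lower bound $\l_i\geq -C\,\s_k/\s_{k-1}$ on each negative eigenvalue, together with a comparability $F^{11}\l_1\gtrsim c\sum_i F^{ii}|\l_i|$-type control. For the quotient with $k-\ell\in\{1,2\}$, $(\s_k/\s_\ell)^{1/(k-\ell)}$ is concave. Writing $F^{1/(k-\ell)}$ explicitly for $k-\ell=1$ (a ratio $\s_k/\s_{k-1}$) and $k-\ell=2$ (the product of two consecutive ratios), I would extract the positive Hessian contribution $\frac{1}{k-\ell}\cdot\frac{(\s_k)_1^2}{\ldots}$ and compare it with $F^{11}u_{111}^2/\l_1$. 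The plan is to handle the third-derivative terms $u_{11i}$, $i>1$, using $2F^{ii}/(\l_1-\l_i)$, which absorbs them once $\l_i\le\delta\l_1$. The indices with $\l_i$ close to $\l_1$ are treated by the standard splitting. What I would try first is the Ren--Wang style inequality $-\frac{\s_k^{pp,qq}\xi_p\xi_q}{\s_k}+\frac{(\sum\s_k^{pp}\xi_p)^2}{\s_k^2}\ge \ldots$, specialized to the case where the negative eigenvalues are few.

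\textbf{Doubling and conclusion.} The maximum principle then gives $\eta^{\beta}\l_1\le C$ only for a fixed $\beta$, with $C$ depending on $\|u\|_{C^1}$. The fixed $\beta$ produces an estimate of the form
\[
\l_1(x_0)\le C r^{-\beta}\sup_{B_r(x_0)}\l_1^{\theta},\qquad \theta<1,
\]
or alternatively $\l_1\le C(1+r^{-2})$ fails to close directly because the gradient term depends on $\|u\|_{C^1}$. I would close this with the pointwise doubling lemma of Polá\v{c}ik--Quittner--Souplet type. Assuming $M(x)=\l_1(x)^{1/2}$ is huge, choose a point where $M(y)\geq M(x)/2$ on $B_{R/M(x)}(x)$ fails nowhere, and rescale $v(z)=M^{2}\big(u(x+z/M)-\ldots\big)$. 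The equation is scale invariant only up to $M^{2(k-\ell)}$, but since $F$ is homogeneous of degree $k-\ell$ the rescaled solution solves $F=M^{-2(k-\ell)}\to0$. On a ball of radius $R$ this keeps $D^2$ bounded by 4 and hence $C^1$ controlled, so the local estimate contradicts $\l_1(0)=1$ as $R\to\infty$. Finally, \eqref{eqn.Hessian-L-infty} follows by combining this estimate with Chen's gradient bound \cite{C15} on $B_{3/2}$.
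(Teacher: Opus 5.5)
There is a genuine gap in both places where the real work lies: the concavity inequality in the regime $\l_{\min}<-\ve_0\l_{\max}$, and the mechanism that turns a Jacobi inequality into a Hessian bound.

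\textbf{The concavity inequality.} Your mechanism rests on the claim that, for $\l\in\G_k$ with $2k>n$, each negative eigenvalue satisfies $\l_i\geq -C\s_k/\s_{k-1}$. This is false. On $\{F=1\}$ with $\ell=k-1$ it would amount to semiconvexity, $\l_{\min}\geq -C$. For instance, take $n=3$, $k=2$, $\ell=1$ and $\l=(N,N,-t)$ with $t=\frac{N^2-2N}{2N-1}\approx\frac N2$. Then $\l\in\G_2$ and $\s_2(\l)=\s_1(\l)$, while $\l_3\to-\infty$. The only general bound is $\l_i>-\frac{n-k}{k}\l_1$ (Lemma \ref{eqn.lambda-i-lower}), and the regime $\l_{\min}<-\ve_0\l_{\max}$ is precisely the hard one. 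The complementary, dynamically semiconvex regime is already covered by Dong--Zhang (Lemma \ref{lem.dynamic-concavity}). Your sketch (``try a Ren--Wang style inequality'') gives no argument for the hard regime.

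The paper argues by contradiction. Normalize $\mu=\l/\l_1$; a limit $\ol\mu$ then has $\s_k(\ol\mu)=0$ and $\ol\mu_n<-\ve_0/2$. Lemma \ref{lem.singular-projection} therefore forces $\p_1\s_k(\ol\mu)>0$. This nondegeneracy lets one transfer Yan's inequality for $\s_k$ (Lemma \ref{lem.concavity-sigma-k} with $\g=1+2\d$) to $F$. The strict margin $\d\,\p_1\s_k(\ol\mu)$ absorbs the $O(\l_1^{-(k-\ell)})$ errors coming from $\s_\ell$. The hypothesis $2k>n$ enters exactly here, since $\g>1$ requires it, and again through $\frac{2\l_1}{\l_1-\l_i}-1\geq\frac{2k}{n}-1>0$ in the Jacobi computation.

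\textbf{Closing the estimate.} As you notice, the maximum principle for $\log\l_1+\frac a2|Du|^2+\b\log\eta$ with a cutoff independent of $u$ does not close. The gain $aF^{ii}\l_i^2$ can be of order $a\sum_iF^{ii}$ only, not growing with $\l_1$, so one merely bounds $1/\eta$ at the maximum point. The Pol\'a\v{c}ik--Quittner--Souplet rescue also fails, for three reasons:
\begin{enumerate}[(i)]
\item The rescaled functions solve $F(D^2v)=M^{-2(k-\ell)}\to0$. Any ``local estimate'' for $v$ on $B_R$ is just the original estimate for $u$ on $B_{R/M}(x)$, whose constant degenerates.
\item You only have $C^{1,1}$ compactness, so $\l_1(D^2v(0))=1$ does not pass to the limit.
\item The limiting degenerate equation $\s_k(D^2v)=0$ has nontrivial entire solutions such as $\frac12z_1^2$, so there is no Liouville theorem to contradict.
\end{enumerate}

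What is missing are the two ingredients the paper actually uses.
\begin{enumerate}[(i)]
\item A genuine doubling inequality $\sup_{B_{2r}(y)}\l_{\max}\leq C\sup_{B_r(y)}\l_{\max}$. It is obtained from the Jacobi inequality for $\l_{\max}^\d$ and the Korevaar-type cutoff $e^{(c_y-\vp_y)/\g}-1$, where $\vp_y=(x-y)\cdot Du-u+u(y)+\frac\a2|x-y|^2-\b|Du|^2$. For $\l_j<0$, the possible cancellation in $(\vp_y)_j$ is controlled by $F_j\geq c_2\sum_iF_i$ (Lemma \ref{lem.Fj-lambdaj}) together with the term $2\b\sum_iF_i\l_i^2$.
\item A compactness argument at the original scale, not a blow-up. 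A uniform limit of a contradicting sequence is twice differentiable a.e.\ (Lemma \ref{lemma.alexandrov-quotient}). Savin's small perturbation theorem then gives uniform $C^{2,\a}$ bounds for the $u_j$ on a small ball around such a point, and finitely many doubling steps carry this bound to the origin.
\end{enumerate}
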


\begin{remark}
The condition $2k>n$ is essentially technical and is used only in
establishing the strengthened concavity inequality and the resulting Jacobi inequality.
\end{remark}

\medskip

The study of interior Hessian estimates for Hessian equations has a long history.

For the $k$-Hessian equation $\s_k(D^2u)=f$,
the classical interior estimate goes back to Heinz \cite{H59}
for the Monge--Amp\`ere equation $\det(D^2u)=1$ in dimension two;
alternative proofs were later given by Chen--Han--Ou \cite{CHO16} and Liu \cite{Liu21}.
However, Pogorelov \cite{P78} constructed singular examples in dimensions $n\geq3$,
and Urbas \cite{U90} obtained analogous examples
for the $k$-Hessian equation $\s_k(D^2u)=f$ with $k\geq3$.

These counterexamples do not cover the quadratic case $k=2$.
In dimension three,
Warren--Yuan \cite{WY09} established the interior estimate for $\s_2(D^2u)=1$
and Qiu \cite{Q24} treated positive variable right-hand sides.
In dimension four,
Shankar and Yuan \cite{SY25} proved the interior Hessian estimate
for $\s_2(D^2u)=1$,
and also obtained corresponding estimates in higher dimensions
under the dynamic semiconvexity condition
\[\l_{\min}(D^2u)\geq-c(n)\t u,\]
while Fan \cite{Fan26} extended their results to variable right-hand sides.
In arbitrary dimensions, interior Hessian estimates were obtained
under almost convexity \cite{MSY19},
semiconvexity \cite{SY20},
or the condition $\s_3(D^2u)\geq-A$ \cite{GQ19}.
Interior regularity was also established
for convex viscosity solutions \cite{M21,CJTZ26,ZZ26}.
Recently,
Li--Wu \cite{LW26b} obtained interior Hessian estimates
for $\s_2(D^2u)=1$ in arbitrary dimensions
without additional convexity assumptions
by combining the Pogorelov-type estimate
of Chou--Wang \cite{CW01}
with a quantitative separation-propagation method.
Chen--Zhou--Zhu \cite{CZZ26} use this method
to study interior regularity for $\s_2(D^2u)=f(x)$
with positive $C^\alpha$ right-hand sides.

Interior Hessian estimates have also been studied for Hessian quotient equations
\[
\frac{\s_k(D^2u)}{\s_\ell(D^2u)}=f(x,u)
\quad\text{for}\ 1\leq\ell<k\leq n.
\]
For $\s_3/\s_1$ in dimensions three and four,
interior Hessian estimates follow from the special Lagrangian structure
\cite{CWY09,WY14,Z24};
Lu \cite{Lu23} subsequently gave a proof
based on the Jacobi inequality in dimension three.
Using a concavity inequality from \cite{GS26},
Lu \cite{Lu25} established interior estimates for $\s_n/\s_{n-1}$ and $\s_n/\s_{n-2}$.
He also constructed convex counterexamples
when $k-\ell\geq3$.
Jiao--Sui \cite{JS26} treated $\s_2/\s_1$
for $2$-convex solutions in dimension three
and semiconvex solutions in higher dimensions,
while Mei--Yan \cite{MY26} obtained estimates
for semiconvex solutions of $\s_3/\s_\ell=1$ for $\ell=1,2$ in arbitrary dimensions.
Li--Wu \cite{LW26b} obtained estimates for $2$-convex solutions
of $\s_2/\s_1=1$ in arbitrary dimensions
by subtracting a quadratic polynomial
and reducing the equation to $\s_2(D^2u)=1$.
For general Hessian quotients with $k-\ell\in\{1,2\}$,
Lu--Tsai \cite{LT26} derived interior estimates
for convex solutions under a structural concavity assumption.
This assumption was removed for $\ell=k-1$ by Tsai \cite{T26}
through a change of basis for symmetric polynomials,
and for both cases by Li--Wu \cite{LW26a}
through a contradiction argument and a one-dimensional lifting.
Around the same time,
Dong--Zhang \cite{DZ26} established corresponding estimates
for admissible semiconvex solutions
with positive $C^2$ right-hand sides $f(x,u)$.

\medskip

Two approaches to interior estimates for Hessian equations,
both based on Jacobi inequalities,
are closely related to our proof.

The first is the integral method of Warren--Yuan \cite{WY09},
refined by Qiu \cite{Q24} to avoid the Sobolev inequality
and further developed by Shankar--Yuan \cite{SY20}
for semiconvex solutions of the $\s_2$ equation.
In the quotient setting \cite{Lu25,LT26,DZ26},
concavity inequalities first control
the third-order terms in the largest-eigenvalue calculation,
yielding a Jacobi inequality for $\log\l_{\max}(D^2u)$.
A Lewy--Legendre transform then gives,
after normalization, a uniformly elliptic inequality.
The resulting mean-value estimate
reduces the pointwise bound to a weighted integral,
which is controlled by repeated integration by parts.

The second approach converts the Jacobi inequality
into a doubling inequality by the maximum principle.
Following Qiu's doubling estimate for the $\s_2$ equation \cite{Q24},
Shankar--Yuan \cite{SY25} developed a framework
combining doubling, small-perturbation theory \cite{Sav07}, and a compactness argument.
Shankar \cite{S26} applied this framework to the special Lagrangian equation
by combining the radial-derivative test function of Guan--Qiu \cite{GQ19}
with the Korevaar-type exponential cutoffs \cite{K87}.
Fung \cite{F26a} adapted this construction to Hessian quotient equations
with gradient-dependent right-hand sides
under convexity and suitable structural assumptions.
This route avoids the Legendre transform
and integration by parts.

\medskip

The proof of Theorem \ref{thm.Hessian-estimate}
combines ideas from the two approaches.
We establish the required concavity inequality,
derive a Jacobi inequality,
and then obtain the Hessian bound
through doubling and compactness arguments.
For $k$-convex solutions with $2k>n$,
carrying out this strategy presents two main difficulties.

The first difficulty is to establish the concavity inequality
(Lemma \ref{lem.concavity-quotient}).
Dong--Zhang \cite[Remark 2.1]{DZ26}
observed that their concavity inequality extends to
a dynamic semiconvexity regime of the form $\l_{\min}\geq-c\l_{\max}$.
It therefore remains to treat the case $\l_{\min}<-c\l_{\max}$.
We handle this case by a contradiction argument,
normalizing the eigenvalues by $\l_{\max}$,
and showing that the resulting boundary limit $\ol\mu$ satisfies $\p_1\s_k(\ol\mu)>0$.
This nondegeneracy, together with the tangency condition,
allows us to transfer Yan's concavity inequality \cite{Yan26}
to the quotient while retaining a strict margin.
The condition $2k>n$ makes this margin positive in the Jacobi calculation.
With this concavity inequality established,
the standard largest-eigenvalue calculation for solutions of $F(D^2u)=1$ gives
\begin{gather}
F^{ij}a_{ij}\geq2a^{-1}F^{ij}a_i a_j
\quad\text{for}\
a:=e^{\d(n,k) b},\
b:=\log\l_{\max}(D^2u),
\label{eqn.Jacobi-intro}
\end{gather}
in the viscosity sense when $\l_{\max}(D^2u)$ is sufficiently large,
where $F^{ij}:=\p F(D^2u)/\p u_{ij}$.

The second difficulty is to establish the doubling inequality:
\begin{equation}\label{eqn.doubling-intro}
\sup_{B_{2r}(y)}\l_{\max}(D^2u)
\leq C\frb{n,k,r,\|u\|_{C^1(B_1)}}\sup_{B_r(y)}\l_{\max}(D^2u)
\end{equation}
for any $y\in B_{1/2}$ and sufficiently small $r>0$.
To this end, we adapt the barrier construction
used in the separation-propagation argument of \cite[Section 3.2]{LW26b}.
We consider the function
\[
\vp_y(x)
:=(x-y)\cdot Du(x)-u(x)+u(y)+\frac{\a}{2}|x-y|^2-\b|Du(x)|^2,
\]
with the associated exponential cutoff
\[
\psi_y:=e^{(c_y-\vp_y)/\g}-1.
\]
The parameters are chosen so that
the component $\Om_y$ of $\{\vp_y<c_y\}$ containing $B_{2r}(y)$ satisfies
\[
B_{2r}(y)\subset\Om_y\Subset B_{4r}(y),
\quad
\psi_y\geq c_0>0\ \text{in}\ \ol{B_{2r}(y)},
\quad
\psi_y\leq C_0\ \text{in}\ \Om_y.
\]
To obtain \eqref{eqn.doubling-intro},
it suffices to show that a maximum point $x_0$ of $a\psi_y$ over $\ol{\Om_y}$
lies in $\ol{B_r(y)}$
whenever $\l_{\max}(D^2u(x_0))\geq K$.
Otherwise, the maximum principle and \eqref{eqn.Jacobi-intro} give
\[
F^{ij}(\psi_y)_{ij}(x_0)\leq0.
\]
On the other hand,
Lemma \ref{lem.Fj-lambdaj} gives $F_j\geq c\sum_iF_i$ whenever $\l_j<0$.
Combining this bound with the positive quadratic term in the eigenvalues generated by $-\b|Du|^2$,
we control the possible cancellation in $(\vp_y)_i$ caused by negative eigenvalues
and obtain $F^{ij}(\psi_y)_{ij}(x_0)>0$, a contradiction.
This gives \eqref{eqn.doubling-intro}.

Finally, in a compactness argument, 
we combine the Alexandrov-type result (Lemma \ref{lemma.alexandrov-quotient})
with Savin's small perturbation theorem \cite{Sav07}
to obtain a uniform Hessian bound 
for the approximating solutions on a fixed small ball.
Iterating the doubling inequality propagates this bound
to the point where blow-up is assumed, 
yielding a contradiction.

\medskip

\textit{Note on subsequent work.}
After the present work had been completed independently,
Qiu--Yan \cite{QY26} posted a preprint establishing
interior Hessian estimates for $k$-convex solutions of $\s_k(D^2u)/\s_\ell(D^2u)=1$
in arbitrary dimensions
for $2\leq k\leq n$, $0\leq\ell<k$, and $k-\ell\in\{1,2\}$,
without the restriction $2k>n$.

\section{Preliminaries}\label{sec.preliminaries}

In this section,
we fix notation,
recall basic algebraic properties of $\s_k$, $q_k:=\frac{\s_k}{\s_{k-1}}$ and $F$,
and introduce two auxiliary lemmas
that will be used in the subsequent proofs.
Throughout the paper, $C$ denotes a positive constant
that may change from line to line.

Let $\l=(\l_1,\l_2,\dots,\l_n)\in\R^n$.
For $i\in\Z_{[1,n]}$,
denote by $\l|i$ the vector obtained from $\l$ by deleting the $i$-th component.
By convention, $\s_i=0$ if $i<0$ or $i>n$, and $\s_0=1$.

For the Hessian quotient $F=\frac{\s_k}{\s_\ell}$
with $1\leq\ell<k\leq n$, we write
\[
F_i:=\frac{\p F}{\p\l_i}
\quad\text{and}\quad
F_{ij}:=\frac{\p^2F}{\p\l_i\p\l_j}.
\]
The normalized operator $F^{1/(k-\ell)}$
is elliptic, concave, and homogeneous of degree one
in $\G_k$; see \cite{CNS85}.
In particular,
\[
F_i(\l)
=\frac{\s_{k-1}(\l|i)}{\s_\ell(\l)}
-\frac{\s_k(\l)\s_{\ell-1}(\l|i)}{\s_\ell(\l)^2}
>0
\quad\text{for any}\ \l\in\G_k.
\]
Since $F$ is homogeneous of degree $k-\ell$,
Euler's identity and its derivative give
\[
\sum_{i=1}^n\l_iF_i=(k-\ell)F,
\quad
\sum_{i=1}^n\l_iF_{ij}=(k-\ell-1)F_j
\ \text{for any}\ j\in\Z_{[1,n]}.
\]
We also have
\begin{equation}\label{eqn.Fi-sum}
\sum_{i=1}^nF_i
=(n-k+1)\frac{\s_{k-1}(\l)}{\s_\ell(\l)}
-(n-\ell+1)
\frac{\s_k(\l)\s_{\ell-1}(\l)}{\s_\ell(\l)^2}.
\end{equation}

\medskip

Next, we introduce a lower bound for the smallest component of a vector in $\G_k$.

\begin{lemma}\label{eqn.lambda-i-lower}
Let $\l\in\G_k$ with $\l_1\geq\l_2\geq\cdots\geq\l_n$.
Then
\[
\l_i>-\frac{n-k}{k}\l_1\quad \text{for all}\  i\in\Z_{[1,n]}.
\]
\end{lemma}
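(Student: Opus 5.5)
It suffices to treat $i=n$, since $\l_n$ is the smallest component. If $\l_n\geq0$ there is nothing to prove, because $\l_1>0$ (it is the largest component and $\s_1(\l)>0$). So assume $\l_n<0$. The plan is to exploit that the truncated vector $\l|n$ lies in $\G_{k-1}$, together with the expansion
\[
\s_k(\l)=\s_k(\l|n)+\l_n\s_{k-1}(\l|n)>0.
\]

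First I will recall the standard fact that $\l\in\G_k$ implies $\l|i\in\G_{k-1}$ for every $i$, and in particular $\s_{k-1}(\l|n)>0$. This is classical (for instance via the characterization of $\G_k$ by $\s_{k-1}(\l|i)>0$, cf.\ \cite{CNS85}). Since $\l_n<0$, the expansion then gives
\[
-\l_n<\frac{\s_k(\l|n)}{\s_{k-1}(\l|n)}.
\]
If $\s_k(\l|n)\leq0$ this already contradicts $\l_n<0$, so only the case $\s_k(\l|n)>0$ needs work.

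Next I will bound the ratio $\s_k/\s_{k-1}$ on the $(n-1)$-vector $\mu:=\l|n\in\G_{k-1}$. Newton--Maclaurin type inequalities give
\[
\frac{\s_k(\mu)}{\s_{k-1}(\mu)}\leq\frac{n-k}{k}\,\frac{\s_{k-1}(\mu)}{\s_{k-2}(\mu)}\leq\cdots,
\]
where $(n-k)/k$ is the ratio $\binom{n-1}{k}/\binom{n-1}{k-1}$. The cleanest way to conclude is the inequality
\[
\s_k(\mu)\leq\frac{n-k}{k}\,\mu_{\max}\,\s_{k-1}(\mu)\qquad\text{for }\mu\in\G_{k-1},
\]
which I will prove by writing $k\s_k(\mu)=\sum_j\mu_j\s_{k-1}(\mu|j)$. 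Combining it with $\mu_{\max}=\l_1$ yields $-\l_n<\frac{n-k}{k}\l_1$.

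The main obstacle is this last inequality in the non-convex cone, where some $\mu_j$ are negative. For the proof I would use $\s_{k-1}(\mu)=\s_{k-1}(\mu|j)+\mu_j\s_{k-2}(\mu|j)$ and sum over $j$. Also, $\sum_j\s_{k-1}(\mu|j)=(n-k)\s_{k-1}(\mu)$, since $\mu$ has $n-1$ entries. Then
\[
k\s_k(\mu)=\sum_j\mu_j\s_{k-1}(\mu|j)\leq\mu_{\max}\sum_j\s_{k-1}(\mu|j)
\]
holds provided every $\s_{k-1}(\mu|j)\geq0$. This nonnegativity I would justify by a continuity/limit argument: $\s_{k-1}(\mu|j)\geq0$ for $\mu$ in the closure of $\G_k$ in $\R^{n-1}$. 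That covers the case $\s_k(\mu)>0$, i.e.\ $\mu\in\G_k$, which is precisely the case that remains from the previous step. The conclusion is $k\s_k(\mu)\leq\mu_{\max}(n-k)\s_{k-1}(\mu)$, which is the desired bound, and strictness is inherited from the strict inequality $-\l_n<\s_k(\mu)/\s_{k-1}(\mu)$.
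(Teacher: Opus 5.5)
Your proof is correct. The paper does not prove this lemma; it only cites \cite[Lemma 11]{RW23}. So what you offer is a self-contained replacement for that citation, not a variant of an argument in the text.

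Your argument has two steps. First, with $\mu:=\l|n\in\G_{k-1}$, the expansion $\s_k(\l)=\s_k(\mu)+\l_n\s_{k-1}(\mu)$ gives $-\l_n<\s_k(\mu)/\s_{k-1}(\mu)$ and forces $\s_k(\mu)>0$. Second, $k\s_k(\mu)=\sum_j\mu_j\s_{k-1}(\mu|j)\leq\l_1\sum_j\s_{k-1}(\mu|j)=(n-k)\l_1\s_{k-1}(\mu)$. This uses only the classical inclusion $\l\in\G_k\Rightarrow\l|i\in\G_{k-1}$, which the paper itself invokes in the proof of Lemma \ref{lem.singular-projection}. It also makes the sharpness of $\frac{n-k}{k}$ transparent: for $\l=(1,\dots,1,-t)$ both inequalities become equalities as $t\uparrow\frac{n-k}{k}$.

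Two simplifications:
\begin{itemize}
\item No continuity argument is needed for the sign of $\s_{k-1}(\mu|j)$. Since $\mu\in\G_{k-1}$ and $\s_k(\mu)>0$, you have $\mu\in\G_k$ in $\R^{n-1}$, and the same inclusion applied once more gives $\s_{k-1}(\mu|j)>0$.
\item The Newton--Maclaurin display plays no role in the proof and can be deleted.
\end{itemize}

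One small point: for $k=n$ the case $\l_n\geq0$ is not quite ``nothing to prove'', since one needs $\l_n>0$. This is immediate from $\s_n(\l)>0$, and the paper only uses the lemma with $k\leq n-1$.
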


\begin{proof}
See \cite[Lemma 11]{RW23}.
\end{proof}

\medskip

Finally, we establish the lower bounds
for the linearized coefficients of $F$,
which will be used to prove the doubling inequality (Lemma \ref{lem.doubling}).

\begin{lemma}\label{lem.Fj-lambdaj}
Let $1\leq\ell<k<n$, $k-\ell\in\set{1,2}$,
and let $\l\in\G_k$ satisfy $F(\l)=1$.
Then
\begin{equation}\label{eqn.Fj-lambdaj>}
F_j(1+\l_j^2)\geq c_1(n,k)\sum_{i=1}^n F_i\quad
\text{for any}\ j\in\Z_{[1,n]}.
\end{equation}
Moreover, if $\l_j<0$, then
\begin{equation}\label{eqn.Fj>}
F_j\geq c_2(n,k)\sum_{i=1}^nF_i,
\end{equation}
where $c_1=c_1(n,k)$ and $c_2=c_2(n,k)$ are positive constants.
\end{lemma}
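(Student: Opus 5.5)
Order $\l_1\geq\cdots\geq\l_n$. The plan is to compare each $F_j$ with $F_1$ and then bound $\sum_iF_i$ by $C(n,k)(1+\l_1^2)F_1$, treating the two cases $k-\ell=1$ and $k-\ell=2$ with the same scheme.

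\emph{Monotonicity.} For $\l\in\G_k$ the standard identity
\[
(F_i-F_j)(\l_i-\l_j)\leq0
\]
gives $F_1\leq F_2\leq\cdots\leq F_n$, so $F_j\geq F_1$ for every $j$. It holds because $\s_{m}(\l|i)-\s_m(\l|j)=(\l_j-\l_i)\s_{m-1}(\l|ij)$, and the quotient structure gives $F_i-F_j=(\l_j-\l_i)\,G_{ij}$ with $G_{ij}\geq0$ on $\G_k$, e.g.\ by concavity of $F^{1/(k-\ell)}$ restricted to the $(\l_i,\l_j)$-plane together with symmetry. Hence it suffices to prove
\begin{equation}\label{eqn.plan-F1}
\sum_iF_i\leq C(n,k)\,(1+\l_1^2)F_1
\end{equation}
for \eqref{eqn.Fj-lambdaj>} when $j=1$. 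For a general $j$ we then use $F_j\geq F_1$ together with a bound relating $1+\l_j^2$ to $1+\l_1^2$. When $\l_j\geq0$ I would instead apply the scheme directly to index $j$, using $\s_{k-1}(\l|j)\gtrsim\s_{k-1}(\l)/(1+\l_j)$-type bounds. When $\l_j<0$, \eqref{eqn.Fj>} gives the claim even without the factor $1+\l_j^2$, since $|\l_j|\leq\frac{n-k}{k}\l_1$ by Lemma \ref{eqn.lambda-i-lower}.

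\emph{Sum of coefficients.} From \eqref{eqn.Fi-sum} and $F=1$ we get $\sum_iF_i\leq (n-k+1)\s_{k-1}/\s_\ell$. For $F_1$ I would write
\[
F_1=\frac{\s_{k-1}(\l|1)\s_\ell(\l|1)-\s_k(\l|1)\s_{\ell-1}(\l|1)}{\s_\ell(\l)^2},
\]
using $\s_m(\l)=\l_1\s_{m-1}(\l|1)+\s_m(\l|1)$. Newton--Maclaurin type inequalities on $\G_{k-1}$ (note $\l|1\in\G_{k-1}$) make the numerator comparable to $\s_{k-1}(\l|1)\s_\ell(\l|1)$. Then $\s_{k-1}(\l)\leq C(1+\l_1)\s_{k-2}(\l|1)\cdots$, and the normalization $\s_k=\s_\ell$ converts the remaining ratio into at most a factor $\l_1$ more, giving \eqref{eqn.plan-F1}. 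The restriction $k-\ell\leq2$ enters in this step: the comparison needs $\s_k/\s_\ell$ to lose at most two powers of $\l_1$.

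\emph{Negative $\l_j$.} Here $\l_j<0$ and $\l\in\G_k$ force $\s_{k-1}(\l|j)\geq\s_{k-1}(\l)$. Indeed $\s_{k-1}(\l)=\s_{k-1}(\l|j)+\l_j\s_{k-2}(\l|j)$ and $\s_{k-2}(\l|j)>0$. Likewise $\s_{\ell-1}(\l|j)\leq C\s_{\ell-1}(\l)$ up to controlled terms. Plugging into the formula for $F_j$ and using $F=1$, I expect $F_j\geq c\,\s_{k-1}/\s_\ell\geq c\sum_iF_i$.

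The main obstacle is the subtracted term $\s_k\s_{\ell-1}(\l|j)/\s_\ell^2$ in $F_j$, which could cancel the leading term. I would handle it by rewriting $F_j$ via $\s_k=\s_\ell$ and using the strict Newton inequality $\s_{k-1}\s_{\ell}>c\,\s_k\s_{\ell-1}$ applied to $\l|j\in\G_{k-1}$. This is the step where the positive margin is genuinely needed, and I would verify it separately for $\ell=k-1$ and $\ell=k-2$.
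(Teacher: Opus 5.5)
Your proposal does not prove $F_j(1+\lambda_j^2)\ge c_1\sum_iF_i$ for indices with $\lambda_j\ge0$, and that is the substance of the lemma.

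\textbf{The reduction to $j=1$ does not work.} Monotonicity $F_j\ge F_1$ is true but does not help: when $0\le\lambda_j\ll\lambda_1$ there is no bound $1+\lambda_1^2\le C(1+\lambda_j^2)$. So everything rests on your fallback bound $\sigma_{k-1}(\lambda|j)\gtrsim\sigma_{k-1}(\lambda)/(1+\lambda_j)$, which you do not prove.

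\textbf{The fallback bound is false for $\ell=k-2$.} Take $n=4$, $k=3$, $\ell=1$ and $\lambda=(m^2,t,1/m,0)$ with $t=(m^2+1/m)/(m-1)\approx m$ and $m$ large. Then $\sigma_3(\lambda)=mt=\sigma_1(\lambda)$ and $\lambda\in\Gamma_3$. Also $1+\lambda_2^2\approx m^2\ll 1+\lambda_1^2$, so $j=2$ must go through your fallback. But $\sigma_2(\lambda|2)=m$, while $\sigma_2(\lambda)/(1+\lambda_2)\approx m^2$. Here $\sigma_1(\lambda|2)\approx\sigma_1(\lambda)$, so the single factor $\sigma_{k-1}(\lambda|j)$ carries the entire loss $1+\lambda_j^2$. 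Hence no argument that charges at most one power of $(1+\lambda_j)$ to each of $\sigma_{k-1}(\lambda|j)$ and $\sigma_\ell(\lambda|j)$ can work. Your sketch for $F_1$ is also not an argument: the decisive step is the ``$\cdots$''.

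\textbf{For $\ell=k-1$ the fallback is true, but you omit the input that makes it true.} With $\sigma_k=\sigma_{k-1}$ one has $\sigma_{k-1}F_j=\sigma_{k-1}(\lambda|j)-\sigma_{k-2}(\lambda|j)>0$. Hence, for $\lambda_j\ge0$,
\[
\sigma_{k-1}=\sigma_{k-1}(\lambda|j)+\lambda_j\sigma_{k-2}(\lambda|j)<(1+\lambda_j)\,\sigma_{k-1}(\lambda|j).
\]

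\textbf{The missing idea for $\ell=k-2$ is the one the paper uses.}
\begin{enumerate}
\item Write $F=q_kq_{k-1}$ with $q_m=\sigma_m/\sigma_{m-1}$.
\item Apply to each factor the bound $(q_m)_j\ge c\,(\sigma_{m-1}(\lambda|j)/\sigma_{m-1})^2$. This follows from $\sigma_{m-1}^2(q_m)_j=\sigma_{m-1}(\lambda|j)^2-\sigma_m(\lambda|j)\sigma_{m-2}(\lambda|j)$ and Newton's inequality.
\item Using $\sigma_k=\sigma_{k-2}$, this gives $F_j\ge c\,(\sigma_{k-1}(\lambda|j)^2+\sigma_{k-2}(\lambda|j)^2)/(\sigma_{k-1}\sigma_{k-2})$.
\item Cauchy--Schwarz on $\sigma_{k-1}=\sigma_{k-1}(\lambda|j)+\lambda_j\sigma_{k-2}(\lambda|j)$ then yields $F_j(1+\lambda_j^2)\ge c\,\sigma_{k-1}/\sigma_{k-2}\ge c'\sum_iF_i$.
\end{enumerate}
This holds for every $j$ at once, with no ordering and no comparison with $F_1$. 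The sum of squares lets the factor $\lambda_j^2$ fall on whichever term dominates, which a factor-by-factor bound cannot do.

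Your lower bound $\sigma_\ell^2F_j\ge c\,\sigma_{k-1}(\lambda|j)\sigma_\ell(\lambda|j)$ is correct, and could be pushed through. That would require a product estimate using the normalization and Newton--Maclaurin in several cases, which the proposal does not contain.

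\textbf{Your case $\lambda_j<0$ is essentially sound once stated correctly.} The claim $\sigma_{\ell-1}(\lambda|j)\le C\sigma_{\ell-1}$ goes the wrong way: $\lambda_j<0$ forces $\sigma_{\ell-1}(\lambda|j)\ge\sigma_{\ell-1}$. It is also unnecessary. The same lower bound, together with $\sigma_m(\lambda|j)>\sigma_m$ for $m\le k-1$, gives $F_j\ge c\,\sigma_{k-1}/\sigma_\ell\ge c'\sum_iF_i$ directly.
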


\begin{proof}
We divide the proof into two cases.

\textit{Case 1. $F=\frac{\s_k}{\s_{k-1}}$}.
We first claim that
\begin{equation}\label{eqn.qk-i-bound}
\sum_{i=1}^nF_i\leq n-k+1,
\quad
F_j
\geq c\frb{\frac{\s_{k-1}(\l|j)}{\s_{k-1}}}^2
\ \text{for any}\ j\in\Z_{[1,n]},
\end{equation}
where $c=c(n,k)\in(0,1)$.
Indeed, using $\s_k=\s_{k-1}$ and \eqref{eqn.Fi-sum}, we have
\[
\sum_{i=1}^n(q_k)_i
=(n-k+1)-(n-k+2)\frac{\s_k\s_{k-2}}{\s_{k-1}^2}
\leq n-k+1.
\]
Fix any $j\in\Z_{[1,n]}$.
Using the identities
\[
\s_{k-1}=\s_{k-1}(\l|j)+\l_j\s_{k-2}(\l|j)
=\s_k
=\s_k(\l|j)+\l_j \s_{k-1}(\l|j),
\]
we have
\[
\s_{k-1}^2(q_k)_j=(\s_{k-1}(\l|j))^2-\s_{k-2}(\l|j)\s_k(\l|j).
\]
If $\s_k(\l|j)\leq0$, then $(q_k)_j\geq\frb{\frac{\s_{k-1}(\l|j)}{\s_{k-1}}}^2$.
If $\s_k(\l|j)>0$, the Newton--Maclaurin inequality in $n-1$ variables gives
\[
(q_k)_j
\geq\frb{1-\frac{(k-1)(n-k)}{k(n-k+1)}}\frb{\frac{\s_{k-1}(\l|j)}{\s_{k-1}}}^2.
\]
Hence \eqref{eqn.qk-i-bound} holds.

Next, if $\frac{\s_{k-1}(\l|j)}{\s_{k-1}}\geq\frac{1}{2}$,
then $(q_k)_j\geq\frac{c}{4}$.
If $\frac{\s_{k-1}(\l|j)}{\s_{k-1}}<\frac{1}{2}$,
then
\[
\s_{k-1}(\l|j)-\s_{k-2}(\l|j)=\s_{k-1}(q_k)_j>0,
\]
and hence
\[
\l_j^2(q_k)_j
\geq c\l_j^2\frb{\frac{\s_{k-2}(\l|j)}{\s_{k-1}}}^2
=c\frb{1-\frac{\s_{k-1}(\l|j)}{\s_{k-1}}}^2
>\frac{c}{4}.
\]
Combining these estimates with \eqref{eqn.qk-i-bound}, we obtain
\[
F_j(1+\l_j^2)
\geq\frac{c}{4}
\geq \frac{c}{4(n-k+1)}\sum_{i=1}^n F_i.
\]
Moreover, if $\l_j<0$, then $\l\in\G_k$ implies $\s_{k-2}(\l|j)>0$,
and hence $\s_{k-1}(\l|j)=\s_{k-1}-\l_j\s_{k-2}(\l|j)>\s_{k-1}$.
Thus, \eqref{eqn.qk-i-bound} gives
\[
F_j\geq c\geq \frac{c}{n-k+1}\sum_{i=1}^nF_i.
\]

\medskip

\textit{Case 2. $F=\frac{\s_k}{\s_{k-2}}$}.
Since $\s_k=\s_{k-2}$ and $\s_{k-3}(\l|i)\geq0$, we have
\begin{equation}\label{eqn.Fj-trace-k-2}
\sum_{i=1}^nF_i
=\sum_{i=1}^n\frac{\s_{k-2}\s_{k-1}(\l|i)-\s_k\s_{k-3}(\l|i)}{\s_{k-2}^2}
\leq\frac{\sum_{i=1}^n\s_{k-1}(\l|i)}{\s_{k-2}}
=(n-k+1)\frac{\s_{k-1}}{\s_{k-2}}.
\end{equation}
Applying the lower bound in \eqref{eqn.qk-i-bound} to $q_k$ and $q_{k-1}$
and decreasing $c$ if necessary, we obtain
\[
(q_k)_j\geq c\frb{\frac{\s_{k-1}(\l|j)}{\s_{k-1}}}^2,
\quad
(q_{k-1})_j\geq c\frb{\frac{\s_{k-2}(\l|j)}{\s_{k-2}}}^2.
\]
Using $F=q_kq_{k-1}$ and $\s_k=\s_{k-2}$, we obtain
\begin{equation}\label{eqn.Fj-square-k-2}
F_j=q_{k-1}(q_k)_j+q_k(q_{k-1})_j
\geq c\frac{\s_{k-1}(\l|j)^2+\s_{k-2}(\l|j)^2}
{\s_{k-1}\s_{k-2}}.
\end{equation}
On the other hand, the Cauchy--Schwarz inequality gives
\[
\s_{k-1}^2
=\frb{\s_{k-1}(\l|j)+\l_j\s_{k-2}(\l|j)}^2
\leq(1+\l_j^2)
\frb{\s_{k-1}(\l|j)^2+\s_{k-2}(\l|j)^2}.
\]
Combining this with \eqref{eqn.Fj-square-k-2} and
\eqref{eqn.Fj-trace-k-2}, we obtain
\[
F_j(1+\l_j^2)\geq c\frac{\s_{k-1}}{\s_{k-2}}
\geq\frac{c}{n-k+1}\sum_{i=1}^nF_i.
\]
Moreover, if $\l_j<0$, then $\s_{k-2}(\l|j)>0$ and hence $\s_{k-1}(\l|j)>\s_{k-1}$.
Thus, \eqref{eqn.Fj-square-k-2} and \eqref{eqn.Fj-trace-k-2} yield
\[
F_j\geq c\frac{\s_{k-1}(\l|j)^2}{\s_{k-1}\s_{k-2}}
\geq c\frac{\s_{k-1}}{\s_{k-2}}
\geq\frac{c}{n-k+1}\sum_{i=1}^nF_i.
\]
This completes the proof.
\end{proof}

\section{The concavity inequalities}
\label{sec.concavity}

In this section, we establish the concavity inequality for $F$
(Lemma \ref{lem.concavity-quotient}).
We first introduce two auxiliary concavity inequalities.

\begin{lemma}\label{lem.dynamic-concavity}
Let $1\leq\ell<k<n$, $k-\ell\in\set{1,2}$,
and let $\ve_0=\ve_0(n,k)\in(0,1)$ be sufficiently small.
Suppose that $\l\in\G_k$ satisfies
\[
\l_1>\l_2\geq\cdots\geq\l_n,
\quad
F(\l)=1.
\]
Then there exist $K_0=K_0(n,k)>1$ and $\d_0=\d_0(k)>0$ such that,
if
\[
\l_n\geq-\ve_0\l_1
\quad\text{and}\quad
\l_1\geq K_0,
\]
then
\[
-\sum_{i,j=1}^nF_{ij}\xi_i\xi_j
+\frac2{(1+\ve_0)\l_1}\sum_{i=2}^n F_i\xi_i^2
\geq(1+\d_0)\frac{F_1\xi_1^2}{\l_1}
\]
for any $\xi\in\R^n$ with $DF(\l)\cdot\xi=0$.
\end{lemma}

\begin{proof}
See \cite[Remark 2.1]{DZ26} and \cite[Lemma 3.1]{DXZ26}.
\end{proof}

The above inequality requires the dynamic semiconvexity assumption.
To handle the case $\l_n<-\ve_0\l_1$
in the proof of Lemma \ref{lem.concavity-quotient},
we also need the following concavity inequality for $\s_k$.

\begin{lemma}\label{lem.concavity-sigma-k}
Let $2\leq k\leq n-1$
and let $\l\in\G_k$ with $\l_1>\l_2\geq\cdots\geq\l_n$.
Assume that
\[
0<\g<\min\set{\frac{2k}{n},1+\frac{2k-n}{2k^2+n}}.
\]
Then there exists a small constant $\ve=\ve(n,k,\g)>0$ such that
whenever
\[
\l_1\geq\s_k(\l)^{1/k}/\ve,
\]
we have
\begin{equation}\label{eqn.sigma-concavity}
-\sum_{i,j}(\s_k)_{ij}\xi_i\xi_j
+\frac2{\s_k}\frb{\sum_i(\s_k)_i\xi_i}^2
+2\sum_{i=2}^n\frac{(\s_k)_i\xi_i^2}{\l_1-\l_i}
\geq\g\frac{(\s_k)_1\xi_1^2}{\l_1}
\end{equation}
for every $\xi\in\R^n$.
\end{lemma}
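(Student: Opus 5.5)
The statement is Lemma \ref{lem.concavity-sigma-k}, which the introduction attributes to Yan \cite{Yan26}. My plan is to prove it by contradiction and compactness, after normalizing by $\l_1$. Both sides of \eqref{eqn.sigma-concavity} are homogeneous of degree $k-2$ in $\l$ and quadratic in $\xi$, so we may set $\l_1=1$ and $|\xi|=1$.

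Suppose the inequality fails for sequences $\l^{(m)}$ and $\xi^{(m)}$ with $\s_k(\l^{(m)})\leq\ve_m^k\to0$. By Lemma \ref{eqn.lambda-i-lower}, the normalized eigenvalues lie in a compact set, so after passing to a subsequence $\l^{(m)}\to\ol\mu\in\ol{\G_k}$ with $\ol\mu_1=1$ and $\s_k(\ol\mu)=0$.

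The term $\frac2{\s_k}(\sum(\s_k)_i\xi_i)^2$ blows up unless $\sum_i(\s_k)_i\xi_i=O(\s_k^{1/2})$. This is the tangency condition in the limit: $D\s_k(\ol\mu)\cdot\ol\xi=0$. I would then split the analysis according to the structure of $\ol\mu$.

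\textbf{Step 1: nondegeneracy.} I would show that $\p_1\s_k(\ol\mu)=\s_{k-1}(\ol\mu|1)>0$. The reason is that $\ol\mu_1=1$ is the largest entry, and a boundary point of $\G_k$ with its largest entry positive has $\s_{k-1}(\ol\mu|1)\geq c(n,k)>0$. I expect to get this from Newton--Maclaurin on $\ol\mu|1\in\ol{\G_{k-1}}$, arguing that if $\s_{k-1}(\ol\mu|1)$ vanished then $\s_k(\ol\mu)=\ol\mu_1\s_{k-1}(\ol\mu|1)+\s_k(\ol\mu|1)$ would force a degenerate configuration incompatible with $\ol\mu\in\ol{\G_k}$.

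\textbf{Step 2: the limiting inequality.} At $\ol\mu$ I want to prove the strict inequality
\[
-\sum(\s_k)_{ij}\xi_i\xi_j+2\sum_{i\geq2}\frac{(\s_k)_i\xi_i^2}{1-\ol\mu_i}>\g(\s_k)_1\xi_1^2
\]
on the tangent space $D\s_k(\ol\mu)\cdot\xi=0$, uniformly in a neighbourhood so that it survives the passage to the limit. The entries with $\ol\mu_i=1$ need separate treatment, since $1-\ol\mu_i$ vanishes there and the corresponding terms blow up favourably.

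To prove it, I would use the constraint to solve $\xi_1=-\sum_{i\geq2}(\s_k)_i\xi_i/(\s_k)_1$, which is legitimate by Step 1. Then I expand $-(\s_k)_{ij}\xi_i\xi_j=-\sum_{i\neq j}\s_{k-2}(\ol\mu|ij)\xi_i\xi_j$. I would use the classical identity that on $\{D\s_k\cdot\xi=0\}$ at a point with $\s_k=0$, the quantity $-\s_k^{ij}\xi_i\xi_j$ equals a sum of squares of the form $\sum(\s_{k-1}(\cdot|i)\xi_j-\s_{k-1}(\cdot|j)\xi_i)^2/\s_{k-1}$-type expressions, which comes from concavity of $\s_k^{1/k}$ plus the Gårding inequality.

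\textbf{Main obstacle.} The hardest part is getting the constant $\g$ right. The thresholds $2k/n$ and $1+\frac{2k-n}{2k^2+n}$ must come out of Cauchy--Schwarz applied to $(\s_k)_1^2\xi_1^2=(\sum_{i\geq2}(\s_k)_i\xi_i)^2$, weighted by $\frac{(\s_k)_i}{1-\ol\mu_i}$. This requires bounding $\sum_{i\geq2}(\s_k)_i(1-\ol\mu_i)$ in terms of $(\s_k)_1$, using the identities $\sum_i(\s_k)_i=(n-k+1)\s_{k-1}$ and $\sum_i\ol\mu_i(\s_k)_i=k\s_k=0$. These give
\[
\sum_{i\geq2}(\s_k)_i(1-\ol\mu_i)=(n-k+1)\s_{k-1}(\ol\mu)-2(\s_k)_1.
\]
I would then try to show that $\s_{k-1}(\ol\mu)\leq\frac{n}{k}\cdot(\s_k)_1\cdot$(explicit factor). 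This is exactly where $2k/n$ should emerge. The second threshold would come from handling the cross term between $\xi_1$ and the negative-eigenvalue directions.

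I expect this last algebra, together with the case where several $\ol\mu_i$ equal $1$, to be the real difficulty. If the sharp constants cannot be derived, I would fall back on citing \cite{Yan26} directly, since the paper attributes the inequality there.
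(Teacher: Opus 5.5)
There is a genuine gap: your Step 1 is false. Take $\ol\mu=(1,0,\dots,0)$. It is the limit of the admissible vectors $\l=(1,t,\dots,t)\in\G_n\subset\G_k$ as $t\downarrow0$. Along this family $\s_k(\l)\to0$, so the hypothesis $\l_1\geq\s_k(\l)^{1/k}/\ve$ holds for every $\ve$ once $t$ is small. Yet $\p_1\s_k(\ol\mu)=\s_{k-1}(0,\dots,0)=0$.

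More generally, Lemma \ref{lem.singular-projection}, together with its trivial converse, shows that $\p_1\s_k(\ol\mu)=0$ occurs exactly at nonnegative limits with at most $k-1$ nonzero entries. So a compactness argument for Lemma \ref{lem.concavity-sigma-k} cannot avoid these degenerate limits. At such a limit you cannot solve the constraint for $\xi_1$, and the right-hand side of \eqref{eqn.sigma-concavity} tends to $0$. For $k=2$ with $\ol\xi=e_1$ the limit inequality reads $0\geq0$. For $k\geq3$ even $D\s_k(\ol\mu)=0$, so the limiting tangency condition carries no information. Passing to the limit therefore yields no contradiction: all the information sits in the rates at which $\p_1\s_k(\mu)$, $\s_k(\mu)$ and $D\s_k(\mu)\cdot\xi$ vanish.

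This near-convex regime needs a separate quantitative argument. Options include a second normalization by $\p_1\s_k(\mu)$ with a finer expansion, or a dynamic-semiconvexity inequality in the spirit of Lemma \ref{lem.dynamic-concavity}. The paper's own compactness proof of Lemma \ref{lem.concavity-quotient} is arranged to avoid exactly this. The regime $\ol\mu_n\geq-\ve_0/2$ is handed to Lemma \ref{lem.dynamic-concavity}. Compactness is used only when $\ol\mu_n<-\ve_0/2$, where Lemma \ref{lem.singular-projection} forces $\p_1\s_k(\ol\mu)>0$. Lemma \ref{lem.concavity-sigma-k} carries no such restriction, and the paper does not prove it; it cites \cite[Theorem 1.1]{Yan26}.

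Even at nondegenerate limits, the decisive step is not carried out. You leave the strict limiting inequality with $\g$ up to $\min\set{\frac{2k}{n},1+\frac{2k-n}{2k^2+n}}$ as something you ``would try to show'', and the ``classical identity'' is never specified. The identity you do write has a sign error. From $\sum_i(\s_k)_i=(n-k+1)\s_{k-1}$ and $\sum_i\ol\mu_i(\s_k)_i=k\s_k(\ol\mu)=0$ one gets $\sum_{i\geq2}(\s_k)_i(\ol\mu)(1-\ol\mu_i)=(n-k+1)\s_{k-1}(\ol\mu)$, with no $-2(\s_k)_1$. For example, with $n=3$, $k=2$, $\ol\mu=(1,1,-\frac12)$, both sides equal $3$.

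Moreover, the repeated-top-entry case you postpone is exactly where the constant is sharp. Consider the nondegenerate point $\ol\mu=(1,\dots,1,-\frac{n-k}{k})$. Take $\xi$ supported on $\{1,n\}$ with $(\s_k)_1\xi_1+(\s_k)_n\xi_n=0$ at nearby admissible points. Then the left-hand side of \eqref{eqn.sigma-concavity} tends to exactly $\frac{2k}{n}(\s_k)_1\xi_1^2$, so $\g<2k/n$ cannot be improved. Any limiting argument must therefore be exact there. As written, the proposal reduces to your fallback of citing \cite{Yan26}, which is what the paper does.
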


\begin{proof}
See \cite[Theorem 1.1]{Yan26}.
\end{proof}

\medskip

The following algebraic lemma will be used to show that
the limiting vector in the contradiction argument lies in $\G_{k-1}$.

\begin{lemma}\label{lem.singular-projection}
Let $2\leq k\leq n$
and let $\l=(1,\l_2,\ldots,\l_n)\in\ol{\G_k}$ satisfy $\s_k(\l)=0$.
If $\p_1\s_k(\l)=0$, then $\l\in\ol{\G_n}$
and $\l$ has at most $k-1$ nonzero coordinates.
\end{lemma}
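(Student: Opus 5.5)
The plan is to reduce everything to the vector $\mu:=\l|1\in\R^{n-1}$ and then use a root-counting fact about real-rooted polynomials. Since $\l_1=1$, we have
\[
\s_k(\l)=\s_k(\mu)+\s_{k-1}(\mu),\qquad \p_1\s_k(\l)=\s_{k-1}(\mu).
\]
So the two hypotheses $\s_k(\l)=0$ and $\p_1\s_k(\l)=0$ together say
\[
\s_{k-1}(\mu)=\s_k(\mu)=0 .
\]
Thus the generating polynomial
\[
P(t):=\prod_{i=2}^n(t+\l_i)=\sum_{m=0}^{n-1}\s_m(\mu)\,t^{\,n-1-m},
\]
which is real-rooted, has two \emph{consecutive} vanishing coefficients, namely those of $t^{n-k}$ and $t^{n-1-k}$.

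The key step is the following claim.

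\emph{Claim.} If a real-rooted polynomial has two consecutive zero coefficients, then every coefficient below them vanishes as well. For $P$ this means $\s_m(\mu)=0$ for all $m\geq k-1$.

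To prove the claim, suppose it fails. Let $r$ be the number of nonzero $\l_i$, $i\geq2$; then $\s_r(\mu)\neq0$ and $\s_m(\mu)=0$ for $m>r$, so failure means $r\geq k$. Write $P(t)=t^{\,n-1-r}Q(t)$, where $Q$ has degree $r$, nonzero constant term, and all $r$ roots real and nonzero. By Descartes' rule of signs, the number of positive roots of $Q$ is at most the number of sign changes $V(Q)$ of its coefficient sequence, and the number of negative roots is at most $V(Q(-t))$. Now $V(Q)+V(Q(-t))\leq r$, with equality only if no two consecutive coefficients of $Q$ vanish: a block of two or more zeros strictly inside the coefficient sequence costs at least one change in total. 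The two zero coefficients of $P$ sit strictly between the leading coefficient $\s_0=1$ and the nonzero coefficient $\s_r(\mu)$. Hence $Q$ would have fewer than $r$ real roots, a contradiction. Therefore $r\leq k-2$, i.e. $\mu$ has at most $k-2$ nonzero entries.

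It follows that $\l$ has at most $k-1$ nonzero coordinates: the entry $\l_1=1$ together with at most $k-2$ others.

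For $\l\in\ol{\G_n}$, let $\nu\in\R^{s}$, $s\leq k-1$, collect the nonzero entries of $\l$. Zero entries do not affect elementary symmetric functions, so $\s_j(\nu)=\s_j(\l)\geq0$ for all $j\leq s\leq k$, using $\l\in\ol{\G_k}$. Hence $\nu\in\ol{\G_s}$ in $\R^s$. But $\G_s$ in $\R^s$ is exactly the positive orthant, since $\prod(t+\nu_i)$ then has all coefficients positive and so has no nonnegative roots. Taking closures, $\nu\geq0$ componentwise. Thus every coordinate of $\l$ is nonnegative, i.e. $\l\in\ol{\G_n}$.

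The main obstacle is the root-counting claim, specifically making the Descartes bookkeeping airtight. One must check that a gap of at least two zeros between nonzero coefficients strictly reduces $V(Q)+V(Q(-t))$ below the degree. This follows because across such a gap of even length the signs in $Q(t)$ and $Q(-t)$ agree on at most one of the two sequences, and across a gap of odd length at least two the same count still loses one. Rather than cite this, I would verify it directly by pairing the coefficients $a_j,a_{j+g}$ across the gap of length $g-1\geq2$.
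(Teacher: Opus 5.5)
Your proof is correct, but your counting step takes a different route from the paper's.

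\textbf{What is shared.} Both arguments reduce to $\mu=\lambda|1$ with $\sigma_{k-1}(\mu)=\sigma_k(\mu)=0$. Both finish the sign part the same way: a monic polynomial $\prod(t+\nu_i)$ with nonnegative coefficients and nonzero constant term has no root in $[0,\infty)$.

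\textbf{The paper's counting step.} It uses the identity $\sum_i\mu_i^2\sigma_{k-2}(\mu|i)=\sigma_1(\mu)\sigma_{k-1}(\mu)-k\sigma_k(\mu)=0$. Since $\mu\in\overline{\Gamma_{k-1}}$, each $\sigma_{k-2}(\mu|i)\ge0$, so $\sigma_{k-2}(\mu|i)=0$ whenever $\mu_i\neq0$. A downward induction on the nonzero entries, via $\sum_i\sigma_j(\cdot|i)=(r-j)\sigma_j$ and $\sigma_{j-1}(\cdot|i)=(\sigma_j-\sigma_j(\cdot|i))/\mu_i$, then gives $r\le k-2$. This is short and stays within the symmetric-function calculus used elsewhere in the paper. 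However, it already needs the cone condition to do the count.

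\textbf{What your route buys.} Your Descartes argument proves more: for any real vector, $\sigma_{k-1}=\sigma_k=0$ forces $\sigma_m=0$ for all $m\ge k-1$. So the cone hypothesis is used only for the sign conclusion. The cost is the sign-change bookkeeping, which your last paragraph states confusingly. The clean form is this: two consecutive nonzero coefficients at distance $g$ contribute exactly $1$ to $V(Q)+V(Q(-t))$ if $g$ is odd, and $0$ or $2$ if $g$ is even. Hence each pair contributes at most $g$, and at most $g-1$ when $g\ge3$. Since the distances sum to $r$, you get $V(Q)+V(Q(-t))\le r$, with strict inequality when two consecutive coefficients vanish.

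\textbf{A simplification.} The detour through ``$\nu\in\overline{\Gamma_s}$, take closures'' is unnecessary. From $\sigma_j(\nu)\ge0$ and $\sigma_s(\nu)=\prod_i\nu_i\neq0$ you get $\prod_i(t+\nu_i)\ge\sigma_s(\nu)>0$ for $t\ge0$, hence $\nu_i>0$ directly.
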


\begin{proof}
Set $\ul\l:=(\l|1)\in\R^{n-1}$.
Since $\l\in\ol{\G_k}$, we have $\ul\l\in\ol{\G_{k-1}}$.
The assumptions give
\[
0=\p_1\s_k(\l)
=\s_{k-1}(\ul\l),
\quad
0=\s_k(\l)
=\s_{k-1}(\ul\l)+\s_k(\ul\l)
=\s_k(\ul\l).
\]
Thus
\begin{equation}\label{eqn.weighted-Newton}
\sum_{i=2}^n \ul\l_i^2\s_{k-2}(\ul\l| i)
=\s_1(\ul\l)\s_{k-1}(\ul\l)-k\s_k(\ul\l)=0.
\end{equation}
Each summand is nonnegative, so
$\ul\l_i\ne0$ implies $\s_{k-2}(\ul\l| i)=0$ for all $i\in\Z_{[2,n]}$.
If $k=2$, then $\ul\l=0$, and the conclusion follows.

Assume now that $k\geq3$.
Let $\mu=(\mu_1,\ldots,\mu_r)$ be the vector consisting of
all nonzero coordinates of $\ul\l$.
Suppose to the contrary that $r\geq k-1$.
Then $\s_{k-2}(\mu| i)=0$ for all $i\in\Z_{[1,r]}$,
and hence $\s_{k-2}(\mu)=0$.
Furthermore, $\s_{k-3}(\mu|i)=\frac{\s_{k-2}(\mu)-\s_{k-2}(\mu|i)}{\mu_i}=0$ for all $i\in\Z_{[1,r]}$,
and thus $\s_{k-3}(\mu)=0$.
Iteration leads to $\s_0(\mu|i)=0$, a contradiction.
Thus $r\leq k-2$,
and $\ul\l$ has at most $k-2$ nonzero coordinates.

Finally, since $\ul\l\in\ol{\G_{k-1}}$ and $r\leq k-2$, we have
\[
\prod_{i=1}^r(t+\mu_i)
=\sum_{j=0}^r\s_j(\mu)t^{r-j}>0
\quad\text{for any}\ t>0.
\]
This polynomial has no positive root.
Since each $\mu_i$ is nonzero,
we have $\mu_i>0$ for all $i\in\Z_{[1,r]}$.
Together with $\l_1=1$, this shows that $\l\in\ol{\G_n}$.
\end{proof}

\medskip

We now use the above three lemmas to
prove the following concavity inequality for $F$ under $k$-convexity alone.

\begin{lemma}[Key lemma]\label{lem.concavity-quotient}
Let $1\leq\ell<k\leq n-1$, $k-\ell\in\set{1,2}$ and $2k>n$.
Suppose that $\d_0=\d_0(k)>0$ is the constant given in Lemma \ref{lem.dynamic-concavity},
and $\d=\d(n,k)$ satisfies
\begin{equation}\label{eqn.delta-upper}
0<2\d<\min\set{2\d_0,\frac{2k}{n}-1,\frac{2k-n}{2k^2+n}}.
\end{equation}
Then there exists $K=K(n,k)>1$ such that,
for any $\l=(\l_i)\in\G_k$ satisfying
\[
\l_1>\l_2\geq\cdots\geq\l_n,
\quad
F(\l)=1,
\quad
\l_1\geq K,
\]
we have
\begin{equation}\label{eqn.quotient-concavity}
-\sum_{i,j=1}^nF_{ij}\xi_i\xi_j
+2\sum_{i=2}^n\frac{F_i\xi_i^2}{\l_1-\l_i}
\geq(1+\d)\frac{F_1\xi_1^2}{\l_1}
\end{equation}
for every $\xi\in\R^n$ satisfying $DF(\l)\cdot\xi=0$.
\end{lemma}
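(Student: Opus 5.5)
We split according to the size of $\l_n$ relative to $\l_1$, using the constant $\ve_0=\ve_0(n,k)$ of Lemma \ref{lem.dynamic-concavity}.

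\emph{Case $\l_n\geq-\ve_0\l_1$.} Take $K\geq K_0$. For $i\geq2$ we have $\l_1-\l_i\leq(1+\ve_0)\l_1$, so
\[
\frac{2F_i\xi_i^2}{\l_1-\l_i}\geq\frac{2F_i\xi_i^2}{(1+\ve_0)\l_1}.
\]
Lemma \ref{lem.dynamic-concavity} then gives \eqref{eqn.quotient-concavity} with constant $1+\d_0\geq1+\d$, by \eqref{eqn.delta-upper}.

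\emph{Case $\l_n<-\ve_0\l_1$.} We argue by contradiction. Suppose there are $\l^{(m)}\in\G_k$ with $F(\l^{(m)})=1$, $\l^{(m)}_1\to\infty$, $\l^{(m)}_n<-\ve_0\l^{(m)}_1$, and vectors $\xi^{(m)}$ with $DF\cdot\xi^{(m)}=0$ violating \eqref{eqn.quotient-concavity}. Normalize $\mu^{(m)}=\l^{(m)}/\l^{(m)}_1$. By Lemma \ref{eqn.lambda-i-lower} the $\mu^{(m)}$ are bounded, so a subsequence converges to some $\ol\mu=(1,\ol\mu_2,\dots,\ol\mu_n)\in\ol{\G_k}$ with $\ol\mu_n\leq-\ve_0$.

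The equation $\s_k=\s_\ell$ gives $\s_k(\mu^{(m)})=(\l^{(m)}_1)^{\ell-k}\s_\ell(\mu^{(m)})\to0$, hence $\s_k(\ol\mu)=0$. Since $\ol\mu$ has a negative coordinate, $\ol\mu\notin\ol{\G_n}$, and Lemma \ref{lem.singular-projection} yields $\p_1\s_k(\ol\mu)>0$. A lower bound for $\s_\ell(\ol\mu)$ should follow similarly, from the same lemma applied at level $\ell$ or from $\ol\mu\in\G_{k-1}$.

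This nondegeneracy is what lets us compare $F$ with $\s_k$. Write $F=\s_k/\s_\ell$. Then, on the tangent space $DF\cdot\xi=0$,
\[
-F_{ij}\xi_i\xi_j=\frac{1}{\s_\ell}\Bigl(-(\s_k)_{ij}\xi_i\xi_j+F\,(\s_\ell)_{ij}\xi_i\xi_j\Bigr)+\text{cross terms}.
\]
The tangency condition gives $(\s_k)_i\xi_i=F(\s_\ell)_i\xi_i$, which converts the cross terms into $\frac{2}{\s_k}\bigl(\sum_i(\s_k)_i\xi_i\bigr)^2$-type terms, as in \eqref{eqn.sigma-concavity}. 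For $-(\s_\ell)_{ij}\xi_i\xi_j$ we use concavity of $\s_\ell^{1/\ell}$ on $\G_\ell$ only to absorb it, with a loss controlled by $(\s_\ell)_i\xi_i$.

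Next we apply Lemma \ref{lem.concavity-sigma-k} with $\g=1+2\d$. This is admissible by \eqref{eqn.delta-upper}, which is exactly where $2k>n$ enters, since it makes $\min\{2k/n,\,1+\frac{2k-n}{2k^2+n}\}>1+2\d$. The hypothesis $\l_1\geq\s_k^{1/k}/\ve$ holds for large $\l_1$ because $\s_k(\l)=\s_\ell(\l)\leq C\l_1^\ell$. We also need
\[
F_i\approx\frac{(\s_k)_i}{\s_\ell}\quad\text{up to a factor }1+o(1),
\]
for $i=1$ and for those $i$ relevant to the right-hand side. For $i=1$ this uses $\p_1\s_k(\ol\mu)>0$: the correction $\s_k(\s_\ell)_1/\s_\ell^2$ is of relative order $\l_1^{-(k-\ell)}$ compared with $(\s_k)_1/\s_\ell\sim\l_1^{k-1-\ell}$. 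The margin $\g-(1+\d)=\d>0$ absorbs all $o(1)$ errors, yielding \eqref{eqn.quotient-concavity} for large $m$, a contradiction.

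The main obstacle is this transfer. We must control the $\s_\ell$ Hessian term and the $\frac{2}{\s_k}\bigl(\sum_i(\s_k)_i\xi_i\bigr)^2$ term uniformly, and check that $F_i\geq(1-o(1))(\s_k)_i/\s_\ell$ for $i\geq2$, so that the third-order term $2\sum_{i\geq2}F_i\xi_i^2/(\l_1-\l_i)$ still dominates the $\s_k$ version. For the latter we will try the expansion of $F_i$ from Section \ref{sec.preliminaries}, using $\s_{\ell-1}(\l|i)\leq C\l_1^{\ell-1}$ and, where $(\s_k)_i$ is small, the bound of Lemma \ref{lem.Fj-lambdaj}.
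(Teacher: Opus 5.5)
There is a genuine gap. Your outline follows essentially the paper's structure: Lemma \ref{lem.dynamic-concavity} when $\l_n\geq-\ve_0\l_1$; otherwise contradiction, normalization $\mu=\l/\l_1$, a limit $\ol\mu$ with $\s_k(\ol\mu)=0$, Lemma \ref{lem.singular-projection} giving $\p_1\s_k(\ol\mu)>0$ (hence $\ol\mu\in\G_{k-1}$ and $\s_\ell(\ol\mu)>0$), and Yan's inequality with $\g=1+2\d$. But the step you call the main obstacle is left open, and it does not close without an extra idea.

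Set $\eta:=F(\mu)=\l_1^{\ell-k}\to0$. Using tangency, the exact identity is (there are no cross terms)
\begin{align*}
&\s_\ell(\mu)\Bigl(-\p_\xi^2F+2\sum_{i\geq2}\frac{F_i\xi_i^2}{1-\mu_i}-(1+\d)F_1\xi_1^2\Bigr)\\
&\quad=-\p_\xi^2\s_k+2\sum_{i\geq2}\frac{\p_i\s_k\,\xi_i^2}{1-\mu_i}-(1+\d)\p_1\s_k\,\xi_1^2\\
&\qquad+\eta\Bigl(\p_\xi^2\s_\ell-2\sum_{i\geq2}\frac{\p_i\s_\ell\,\xi_i^2}{1-\mu_i}+(1+\d)\p_1\s_\ell\,\xi_1^2\Bigr).
\end{align*}
After Yan's inequality you must also pay $\frac{2}{\s_k}(\p_\xi\s_k)^2=2\eta(\p_\xi\s_\ell)^2/\s_\ell$.

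The only gain is the margin $\d\,\p_1\s_k(\mu)\,\xi_1^2$, which involves $\xi_1$ alone. The errors are of size $\eta|\xi|^2$ and $\eta\sum_{i\geq2}\xi_i^2/(1-\mu_i)$, and the latter is not even bounded by $C\eta|\xi|^2$ when some $\mu_i\to1$. So you need an a priori bound on $\xi_i$, $i\geq2$, in terms of $\xi_1$.

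Your proposed tool does not supply this, and it points the wrong way. The $\s_\ell$-Hessian enters as $+\eta\,\p_\xi^2\s_\ell$, which must be bounded from below. Concavity of $\s_\ell^{1/\ell}$ only gives the upper bound $\p_\xi^2\s_\ell\leq\frac{\ell-1}{\ell}(\p_\xi\s_\ell)^2/\s_\ell$. The multiplicative comparison $F_i\geq(1-o(1))\p_i\s_k/\s_\ell$ does in fact hold, since $\p_i\s_k(\ol\mu)\geq\p_1\s_k(\ol\mu)>0$. But by itself it leaves a deficit in the coefficient $2$ of Yan's inequality, of the same uncontrolled form.

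The missing idea, as in the paper, is to extract the bound from the contradiction hypothesis itself.
\begin{enumerate}
\item If $\xi_1=0$ the inequality is trivial, because concavity of $F^{1/(k-\ell)}$ gives $-\p_\xi^2F\geq0$ on $\{DF\cdot\xi=0\}$. So you may normalize $\xi_1=1$.
\item Drop $-\p_\xi^2F\geq0$ in the violated inequality and use $F_i\geq F_1$ for $i\geq2$. This gives $\xi_i^2/(1-\mu_i)\leq(1+\d)/2$ for every $i\geq2$.
\item Together with Lemma \ref{eqn.lambda-i-lower}, this also bounds $|\xi|$.
\item Then every $\eta$-term above is $O(\eta)$, including $2\eta(\p_\xi\s_\ell)^2/\s_\ell$ (using $\s_\ell(\ol\mu)>0$).
\item The margin $\d\,\p_1\s_k(\mu)\to\d\,\p_1\s_k(\ol\mu)>0$ then yields the contradiction.
\end{enumerate}
With this step inserted, your argument becomes the paper's proof.
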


\begin{proof}
Suppose to the contrary that
there exist sequences $\l^{\nu}\in\G_k$ and $\xi^{\nu}\in\R^n$ such that
\begin{gather}
\l^\nu_1>\l^\nu_2\geq\cdots\geq\l^\nu_n,
\quad
F(\l^\nu)=1,
\quad
\l_1^\nu\geq\nu\to+\infty,
\quad
DF(\l^\nu)\cdot\xi^\nu=0,
\nonumber\\
-\p_{\xi^\nu}^2F(\l^\nu)
+2\sum_{i=2}^n\frac{F_i(\l^\nu)(\xi^\nu_i)^2}{\l_1^\nu-\l_i^\nu}
<(1+\d)\frac{F_1(\l^\nu)(\xi^\nu_1)^2}{\l_1^\nu}.
\label{eqn.concavity-contrary}
\end{gather}
By the concavity of $F^{1/(k-\ell)}$
and the homogeneity in $\xi^\nu$,
we may assume that $\xi_1^\nu=1$.
Set
\[
\ve^\nu:=\frac{1}{\l^\nu_1},
\quad
\mu^\nu:=\ve^\nu\l^\nu,
\quad
\eta^\nu:=(\ve^\nu)^{k-\ell}.
\]
By homogeneity, we have
\[
\mu^\nu_1=1,
\quad
F(\mu^\nu)=\eta^\nu\to0,
\quad
DF(\mu^\nu)\cdot\xi^\nu=0.
\]
By Lemma \ref{eqn.lambda-i-lower}, it follows that
\begin{equation}\label{eqn.mu-i-lower-bound}
\mu^\nu_i
>-\frac{n-k}{k}\mu^\nu_1
=-\frac{n-k}{k}
\quad\text{for all}\ i\in\Z_{[1,n]}.
\end{equation}
After passing to a subsequence,
we may assume that $\mu^\nu\to\ol\mu\in\ol{\G_k}$.
Since
\[
\s_k(\mu^\nu)=\eta^\nu\s_\ell(\mu^\nu)\to0,
\]
we have $\bar\mu_1=1$ and $\s_k(\bar\mu)=0$.
Next,
let $\ve_0$ and $K_0=K_0(n,k)$ be as in Lemma \ref{lem.dynamic-concavity} for $F$.
We consider two cases separately.

\medskip

\textit{Case 1. $\ol\mu_n\geq-\ve_0/2$.}\
For sufficiently large $\nu$,
we have $\l_n^{\nu}\geq-\ve_0\l_1^{\nu}$ and $\l_1^\nu\geq K_0$.
In particular, $\l_1^{\nu}-\l_i^{\nu}\leq(1+\ve_0)\l_1^{\nu}$ for all $i\in\Z_{[2,n]}$.
Thus, we can apply Lemma \ref{lem.dynamic-concavity} to get
\begin{align*}
-\p_{\xi^\nu}^2F(\l^\nu)
+2\sum_{i=2}^n\frac{F_i(\l^\nu)(\xi^\nu_i)^2}{\l_1^\nu-\l_i^\nu}
&\geq-\p_{\xi^\nu}^2F(\l^\nu)
+\frac2{(1+\ve_0)\l_1^\nu}\sum_{i=2}^nF_i(\l^\nu)(\xi^\nu_i)^2
\\
&\geq(1+\d_0)\frac{F_1(\l^\nu)(\xi^\nu_1)^2}{\l_1^\nu},
\end{align*}
which contradicts the inequality \eqref{eqn.concavity-contrary}.

\textit{Case 2. $\ol\mu_n<-\ve_0/2$.}
Lemma \ref{lem.singular-projection} implies that $\p_1\s_k(\ol\mu)>0$,
and hence $\s_{k-1}(\ol\mu)>0$, and $\ol\mu\in\G_{k-1}$.
By the inequality \eqref{eqn.concavity-contrary},
the concavity of $F^{1/(k-\ell)}$ and $F_i(\l^\nu)\geq F_1(\l^\nu)>0$,
for any $i\in\Z_{[2,n]}$, we have
\[
2F_1(\l^\nu)\frac{(\xi^\nu_i)^2}{\l_1^\nu-\l_i^\nu}
\leq2\sum_{i=2}^n\frac{F_i(\l^\nu)(\xi^\nu_i)^2}{\l_1^\nu-\l_i^\nu}
<(1+\d)\frac{F_1(\l^\nu)}{\l_1^\nu},
\]
and hence
\begin{equation}\label{eqn.xi-nu<}
\frac{(\xi^\nu_i)^2}{1-\mu_i^\nu}\leq\frac{1+\d}{2}.
\end{equation}
Combining this with \eqref{eqn.mu-i-lower-bound}, we obtain
\begin{equation}\label{eqn.xi-nu<C}
|\xi^\nu|^2-1
=\sum_{i=2}^n(\xi^\nu_i)^2
\leq\sum_{i=2}^n\frac{1+\d}{2}(1-\mu_i^\nu)
\leq\sum_{i=2}^n\frac{1+\d}{2}\frb{1+\frac{n-k}{k}}
\leq\frac{n(n-1)(1+\d)}{2k}.
\end{equation}
Differentiating $\s_k=F\s_\ell$
and using $DF(\mu^\nu)\cdot\xi^\nu=0$, we obtain
\begin{align*}
\s_k(\mu^\nu)&=F(\mu^\nu)\s_\ell(\mu^\nu)=\eta^\nu\s_\ell(\mu^\nu),
\\
\s_\ell(\mu^\nu)F_i(\mu^\nu)
&=\p_i\s_k(\mu^\nu)-\eta^\nu\p_i\s_\ell(\mu^\nu),\
\\
\s_\ell(\mu^\nu)\p_{\xi^\nu}^2F(\mu^\nu)
&=\p_{\xi^\nu}^2\s_k(\mu^\nu)-\eta^\nu\p_{\xi^\nu}^2\s_\ell(\mu^\nu),
\\
\p_{\xi^\nu}\s_k(\mu^\nu)
&=\eta^\nu\p_{\xi^\nu}\s_\ell(\mu^\nu).
\end{align*}
Since $\mu_1^\nu=1$ and $\s_k(\mu^\nu)\to0$,
Lemma \ref{lem.concavity-sigma-k} applies
with $\l=\mu^\nu$, $\xi=\xi^\nu$, and $\g=1+2\d$
for all sufficiently large $\nu$.
Using homogeneity and combining this with \eqref{eqn.xi-nu<}, \eqref{eqn.xi-nu<C}
and $|\mu^\nu_i|\leq1$ for all $i\in Z_{[1,n]}$,
we obtain
\begin{align*}
0&>\s_\ell(\mu^\nu)\frb{-\p_{\xi^\nu}^2F(\mu^\nu)
+2\sum_{i=2}^n\frac{F_i(\mu^\nu)(\xi^\nu_i)^2}{1-\mu_i^{\nu}}
-(1+\d)F_1(\mu^\nu)}
\\
&=-\p_{\xi^\nu}^2\s_k(\mu^\nu)+\eta^\nu\p_{\xi^\nu}^2\s_\ell(\mu^\nu)
+2\sum_{i=2}^n\frac{\frb{\p_i\s_k(\mu^\nu)-\eta^\nu\p_i\s_\ell(\mu^\nu)}(\xi^\nu_i)^2}{1-\mu_i^{\nu}}
\\
&\quad\,
-(1+\d)\frb{\p_1\s_k(\mu^\nu)-\eta^\nu\p_1\s_\ell(\mu^\nu)}
\\
&=-\p_{\xi^\nu}^2\s_k(\mu^\nu)
+2\sum_{i=2}^n\frac{\p_i\s_k(\mu^\nu)(\xi^\nu_i)^2}{1-\mu_i^{\nu}}
-(1+2\d)\p_1\s_k(\mu^\nu)+\d\p_1\s_k(\mu^\nu)
\\
&\quad\,
+\eta^\nu\frb{
\p_{\xi^\nu}^2\s_\ell(\mu^\nu)
-2\sum_{i=2}^n\frac{\p_i\s_\ell(\mu^\nu)(\xi^\nu_i)^2}{1-\mu_i^{\nu}}
+(1+\d)\p_1\s_\ell(\mu^\nu)
}
\\
&\geq-\frac{2}{\s_k(\mu^\nu)}(\p_{\xi^\nu}\s_k(\mu^\nu))^2
+\d\p_1\s_k(\mu^\nu)-C(n,k)\eta^\nu
\\
&=\d\p_1\s_k(\mu^\nu)
-2\eta^\nu\frac{(\p_{\xi^\nu}\s_\ell(\mu^\nu))^2}{\s_\ell(\mu^\nu)}-C(n,k)\eta^\nu
\\
&\geq\frac{\d}{2}\p_1\s_k(\ol\mu)
>0
\end{align*}
for all large $\nu$.
Here we used $\eta^\nu\to0$,
the boundedness of $\xi^\nu$,
$\s_\ell(\mu^\nu)\to\s_\ell(\ol\mu)>0$
and $\p_1\s_k(\mu^\nu)\to\p_1\s_k(\ol\mu)>0$.
This contradiction completes the proof.
\end{proof}

\section{The Jacobi inequality and the doubling inequality}\label{sec.Jacobi}

In this section,
we first combine Lemma \ref{lem.concavity-quotient}
with the standard largest-eigenvalue calculation
(see also \cite[Lemma 4.1]{Lu25})
to derive the Jacobi inequality.
We then use this inequality,
together with a modification of the pointwise doubling method
introduced in \cite[Proposition 4.2]{S26}
and subsequently employed in \cite[Section 4]{F26a},
to establish the doubling inequality.

Throughout this section,
we assume that $1\leq\ell<k\leq n-1$,  $k-\ell\in\set{1,2}$ and $2k>n$.
For the operator $F=\frac{\s_k}{\s_\ell}$,
we use the notation
\[
F^{ij}
:=\frac{\p F}{\p u_{ij}}(D^2u),
\quad
F^{pq,rs}
:=\frac{\p^2F}{\p u_{pq}\p u_{rs}}(D^2u).
\]
At a point where $D^2u$ is diagonal
with eigenvalues $\l_1,\l_2,\dots,\l_n$,
we have
\[
F^{ij}=F_i\d_{ij}\quad\text{and}\quad F^{ii,jj}=F_{ij}.
\]

\medskip

With this notation in place,
we first establish the following Jacobi inequality.

\begin{lemma}[Jacobi inequality]\label{lem.Jacobi}
Suppose that $\d=\d(n,k)\in(0,1)$ and $K=K(n,k)$
are the constants given in Lemma \ref{lem.concavity-quotient}.
Assume that $u\in C^4(B_1)$ is a $k$-convex solution of \eqref{eqn.sigma-quotient} in $B_1$.
Let $b(x):=\log\l_{\max}(D^2u(x))$ and $a(x):=\l_{\max}(D^2u(x))^\d$.
Then
\begin{equation}\label{eqn.jacobi}
F^{ij}b_{ij}\geq\d F^{ij}b_i b_j
\quad\text{and}\quad
F^{ij}a_{ij}
\geq2a^{-1}F^{ij}a_i a_j
\end{equation}
hold in the viscosity sense
at every point $x\in B_1$ satisfying $\l_{\max}(D^2u)\geq K$.
\end{lemma}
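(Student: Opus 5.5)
We follow the standard largest-eigenvalue calculation (as in \cite[Lemma 4.1]{Lu25}), with Lemma \ref{lem.concavity-quotient} supplying the key concavity input.

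\emph{Reduction to a smooth comparison function.} Fix $x_0$ with $\l_{\max}(D^2u(x_0))\geq K$ and rotate coordinates so that $D^2u(x_0)$ is diagonal with $\l_1\geq\l_2\geq\cdots\geq\l_n$. If $\l_1$ has multiplicity $m$, we cannot differentiate $\l_{\max}$ directly. Instead, in the viscosity sense it suffices to work with a smooth function touching $b$ from below at $x_0$. We take the usual perturbation: let $B$ be the constant diagonal matrix with entries $B_{11}=0$ and $B_{ii}=1$ for $i\geq2$. Then the largest eigenvalue $\tilde\l_1$ of $D^2u-\epsilon\,\mathrm{diag}(0,1,\dots,1)$, or alternatively the classical trick with $\l_1$ of $D^2u-B\epsilon$, is simple near $x_0$, is smooth there, and satisfies $\tilde\l_1\leq\l_{\max}$ with equality at $x_0$. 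Letting $\epsilon\to0$ at the end reduces the problem to the case $\l_1>\l_2$, where the slightly perturbed spectrum still fits the hypotheses of Lemma \ref{lem.concavity-quotient}.

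\emph{Derivatives of $\l_1$.} At $x_0$ we use the standard formulas
\[
(\l_1)_i=u_{11i},
\quad
(\l_1)_{ii}\geq u_{11ii}+2\sum_{p\geq2}\frac{u_{1pi}^2}{\l_1-\l_p}.
\]
Differentiating the equation twice in $x_1$ gives
\[
F^{ii}u_{ii11}=-F^{pq,rs}u_{pq1}u_{rs1}.
\]
Since $u_{11ii}=u_{ii11}$, we obtain
\[
F^{ii}(\l_1)_{ii}\geq -F^{pq,rs}u_{pq1}u_{rs1}+2\sum_i\sum_{p\geq2}F_i\frac{u_{1pi}^2}{\l_1-\l_p}.
\]
We split the second-derivative term of $F$ in the usual way:
\[
-F^{pq,rs}u_{pq1}u_{rs1}=-\sum_{p,q}F_{pq}u_{pp1}u_{qq1}+\sum_{p\neq q}\frac{F_p-F_q}{\l_q-\l_p}u_{pq1}^2.
\]
From the off-diagonal part we keep only the terms with $q=1$, $p\geq2$. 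These are nonnegative, and each contributes $\frac{F_p-F_1}{\l_1-\l_p}u_{1p1}^2$.

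In the other sum we keep only the terms with $i=1$, which give $2\sum_{p\ge2}F_1u_{1p1}^2/(\l_1-\l_p)$. Adding the two contributions yields $2\sum_{p\geq2}\frac{F_p u_{pp1}^2}{\l_1-\l_p}$ after using $u_{1p1}=u_{11p}$. The combination is a little lossy here but the sign is favourable: the off-diagonal part supplies $(F_p-F_1)+F_1=F_p$ times $u_{11p}^2/(\l_1-\l_p)$.

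\emph{Applying the key lemma.} We now have two families of third derivatives. The first is $\xi_i:=u_{ii1}$; differentiating the equation once shows $DF(\l)\cdot\xi=0$. The second is $u_{11p}$, which equals the gradient components $(\l_1)_p$. The terms we have kept are
\[
-\sum F_{pq}\xi_p\xi_q
\]
together with $2\sum_{p\ge2}F_p u_{11p}^2/(\l_1-\l_p)$.

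Lemma \ref{lem.concavity-quotient} requires $\xi_i^2$, not $u_{11i}^2$, in the sum over $i\geq2$. To bridge this, we keep both contributions. For $i\ge2$, the term with $u_{1ii}^2=\xi_i^2$ comes from the $\sum_i F_i u_{1pi}^2$ sum with $p=i$, which yields $2\sum_{i\ge2}F_i\xi_i^2/(\l_1-\l_i)$. The term with $u_{11i}^2$ comes from the off-diagonal part. These two pieces are disjoint for $n\geq2$.

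Lemma \ref{lem.concavity-quotient} then gives
\[
F^{ii}(\l_1)_{ii}\geq(1+\d)\frac{F_1u_{111}^2}{\l_1}+2\sum_{i\ge2}\frac{F_iu_{11i}^2}{\l_1-\l_i}.
\]
Since $\l_1-\l_i\leq\l_1+\frac{n-k}{k}\l_1<2\l_1$ by Lemma \ref{eqn.lambda-i-lower} and $2k>n$, the second sum is at least $(1+\d')\sum_{i\ge2}F_iu_{11i}^2/\l_1$. Here $1+\d'=\frac{2k}{n}$, and $\d\leq\d'$ by \eqref{eqn.delta-upper}. Hence
\[
F^{ii}(\l_1)_{ii}\geq(1+\d)\frac{F^{ii}(\l_1)_i^2}{\l_1}.
\]

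\emph{Conclusion.} With $b=\log\l_1$ we have $F^{ii}b_{ii}=F^{ii}(\l_1)_{ii}/\l_1-F^{ii}b_i^2\geq\d F^{ii}b_i^2$. For $a=e^{\d b}$ we compute $a_{ij}=\d a(b_{ij}+\d b_ib_j)$, so
\[
F^{ij}a_{ij}\geq2\d^2aF^{ij}b_ib_j=2a^{-1}F^{ij}a_ia_j.
\]

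The main obstacle I expect is the bookkeeping in the third-order terms: making sure that the $\xi_i^2$ terms and the $u_{11i}^2$ terms come from disjoint parts of the expansion, and handling the non-simple eigenvalue case in the viscosity sense. For the latter, the constant-matrix perturbation should suffice.
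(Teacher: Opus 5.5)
\textbf{Verdict: genuine gap at a repeated largest eigenvalue.} Your computation for a simple top eigenvalue is correct and coincides with the paper's Case $m=1$. The $\xi$-terms $-\sum F_{ij}\xi_i\xi_j+2\sum_{p\ge2}F_p\xi_p^2/(\lambda_1-\lambda_p)$ go into Lemma~\ref{lem.concavity-quotient}, and $2\sum_{p\ge2}F_pu_{11p}^2/(\lambda_1-\lambda_p)\ge\frac{2k}{n}\sum_{p\ge2}F_pu_{11p}^2/\lambda_1$ does the rest. Two small slips there: to get the coefficient $2F_p$ you must keep both ordered pairs $(p,1)$ and $(1,p)$ of the off-diagonal sum, which give $2(F_p-F_1)$; keeping only $q=1$ gives $F_p+F_1$. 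Also, $u_{pp1}$ should read $u_{11p}$.

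\textbf{Why your reduction fails for $m>1$.} The gap is the case $\lambda_1=\cdots=\lambda_m$ with $m>1$, which is the whole reason the lemma is stated in the viscosity sense. Your observation that it suffices to verify the inequality classically for one smooth function touching $b$ from below is fine. But the claim that, after passing to $D^2u-\epsilon\,\mathrm{diag}(0,1,\dots,1)$, ``the slightly perturbed spectrum still fits the hypotheses'' of Lemma~\ref{lem.concavity-quotient} is false. The coefficients $F_i$ and $F_{ij}$, the off-diagonal terms, and the constraint $DF\cdot\xi=0$ all come from differentiating $F(D^2u)=1$. So they are evaluated at $\lambda(D^2u(x_0))$, which still has $\lambda_1=\lambda_2$. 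The vector $(\lambda_1,\lambda_2-\epsilon,\dots,\lambda_n-\epsilon)$ is not on $\{F=1\}$, and nothing is evaluated there. Hence the key lemma, which needs $\lambda_1>\lambda_2$, cannot be invoked. In your barrier the terms with $2\le p\le m$ carry $1/\epsilon$, not $1/(\lambda_1-\lambda_p)$. Letting $\epsilon\to0$ returns you to the degenerate spectrum, not to the simple case. Nothing in your proposal controls $\xi_1=u_{111}$ there.

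\textbf{The missing ingredient.} What you need is the first-order condition at a multiple eigenvalue. Suppose a smooth $\phi$ touches $b$ from above at $x_0$; otherwise there is nothing to check. Then $\phi\ge\log\langle D^2u\,e,e\rangle$, with equality at $x_0$, for every unit $e$ in the top eigenspace. This forces $u_{pqi}=\delta_{pq}\lambda_1\phi_i$ for $p,q\le m$, which is \cite[Lemma 5]{BCD17} as used in the paper. Consequently:
\begin{itemize}
\item $\xi_1=u_{111}=u_{221}=u_{122}=0$, and hence $\phi_1=0$;
\item $\xi_p=u_{1pp}=0$ and $\phi_p=u_{1p1}/\lambda_1=0$ for $2\le p\le m$.
\end{itemize}

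\textbf{How to finish.} With $\xi_1=0$ the key lemma is not needed at all. Concavity of $F^{1/(k-\ell)}$ together with $DF\cdot\xi=0$ gives $-\sum F_{ij}\xi_i\xi_j\ge0$. Your $\epsilon$-barrier then yields, after $\epsilon\to0$,
$F^{ij}\phi_{ij}\ge\sum_{i>m}\bigl(\frac{2\lambda_1}{\lambda_1-\lambda_i}-1\bigr)F_i\phi_i^2\ge\bigl(\frac{2k}{n}-1\bigr)F^{ij}\phi_i\phi_j\ge\delta F^{ij}\phi_i\phi_j$.
This is exactly the paper's Case $m>1$.

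An alternative would also work. The $1/\epsilon$ terms do force $\xi_p=u_{11p}=0$ for $2\le p\le m$. Lemma~\ref{lem.concavity-quotient} can then be extended by continuity to $\lambda_1=\cdots=\lambda_m$ for such $\xi$. Either way, this step has to be supplied; it is not in your proposal.
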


\begin{proof}
Fix $x_0\in B_1$ such that $\l_{\max}(D^2u(x_0))\geq K$.
After a rotation of coordinates, we may assume that
\[
D^2u(x_0)=\op{diag}(\l_1,\ldots,\l_n),
\quad
\l_1=\cdots=\l_m=\l_{\max}>
\l_{m+1}\geq\cdots\geq\l_n.
\]
All quantities below are evaluated at $x_0$.
By \cite[Lemma 5]{BCD17}, it follows that
\begin{gather}
\d_{pq}(\l_1)_i=u_{pqi}
\quad\text{for all}\
p,q\in\Z_{[1,m]},\ i\in\Z_{[1,n]}
\label{eqn.lambda-max-derivative}
\\
(\l_1)_{ii}
\geq u_{11ii}+2\sum_{p>m}\frac{u_{1pi}^2}{\l_1-\l_p}
\quad\text{for all}\ i\in\Z_{[1,n]}
\label{eqn.lambda-max-second}
\end{gather}
in the viscosity sense.
Moreover, we can compute
\begin{gather*}
u_{11i}=\l_1b_i\quad\text{for all}\ i\in\Z_{[m+1,n]},\
\\
b_{ii}
\geq\frac{u_{11ii}}{\l_1}
+2\sum_{p>m}\frac{u_{1pi}^2}{\l_1(\l_1-\l_p)}-\frac{u_{11i}^2}{\l_1^2}
\quad\
\text{for all}\ i\in\Z_{[1,n]}
\end{gather*}
in the viscosity sense.
Hence
\begin{equation}\label{eqn.Fij-bij}
F^{ij}b_{ij}
\geq\frac{1}{\l_1}\sum_{i=1}^nF_i u_{11ii}
+2\sum_{i=1}^n\sum_{p>m}\frac{F_i u_{1pi}^2}{\l_1(\l_1-\l_p)}
-\sum_{i=1}^n\frac{F_i u_{11i}^2}{\l_1^2}.
\end{equation}
Set $\xi_i:=u_{ii1}$ for all $i\in\Z_{[1,n]}$.
Differentiating the equation $F(D^2u)=1$, we have
\[
DF\cdot\xi=\sum_{i=1}^nF_iu_{ii1}=\p_{x_1}F=0.
\]
Differentiating the equation $F(D^2u)=1$ twice in the $x_1$-direction,
and using the concavity of $F^{1/(k-\ell)}$
and the standard second-derivative formula for symmetric functions
of the eigenvalues (see \cite[Lemma 2.2]{LT26}), we obtain
\begin{align}
\sum_{i=1}^nF_i u_{ii11}
&=-\sum_{p,q,r,s=1}^nF^{pq,rs}u_{pq1}u_{rs1}
=-\sum_{i,j=1}^nF_{ij}\xi_i\xi_j-\sum_{i\neq j}F^{ij,ji}u_{ij1}^2
\nonumber\\
&\geq-\sum_{i,j=1}^nF_{ij}\xi_i\xi_j+2\sum_{p>m}\frac{F_p-F_1}{\l_1-\l_p}u_{11p}^2.
\label{eqn.Fi-uii11}
\end{align}
On the other hand,
\begin{equation}\label{eqn.third-derivative-lower-jacobi}
2\sum_{i=1}^n\sum_{p>m}
\frac{F_i u_{1pi}^2}{\l_1(\l_1-\l_p)}
\geq2\sum_{p>m}\frac{F_1u_{11p}^2+F_pu_{1pp}^2}{\l_1(\l_1-\l_p)}
=2\sum_{p>m}\frac{F_1u_{11p}^2+F_p\xi_p^2}{\l_1(\l_1-\l_p)}.
\end{equation}
Substituting
\eqref{eqn.Fi-uii11} and \eqref{eqn.third-derivative-lower-jacobi}
into \eqref{eqn.Fij-bij}, we find
\begin{align*}
F^{ij}b_{ij}
&\geq\frac{1}{\l_1}
\frb{-\sum_{i,j=1}^nF_{ij}\xi_i\xi_j+2\sum_{p>m}\frac{F_p-F_1}{\l_1-\l_p}u_{11p}^2}
+2\sum_{p>m}\frac{F_1u_{11p}^2+F_p\xi_p^2}{\l_1(\l_1-\l_p)}
-\sum_{i=1}^n\frac{F_i u_{11i}^2}{\l_1^2}
\\
&=\frac{1}{\l_1}
\frb{-\sum_{i,j=1}^nF_{ij}\xi_i\xi_j+\sum_{p>m}\frac{2F_p\xi_p^2}{\l_1-\l_p}}
+2\sum_{p>m}\frac{F_p u_{11p}^2}{\l_1(\l_1-\l_p)}-\sum_{i=1}^n\frac{F_i u_{11i}^2}{\l_1^2}.
\end{align*}
Next, we consider two cases for $m$.

\textit{Case 1. $m>1$.}\
By \eqref{eqn.lambda-max-derivative}, it follows that
\begin{gather*}
\xi_1=u_{111}=u_{221}=u_{122}=\d_{12}(\l_1)_2=0,
\\
\xi_i=u_{ii1}=u_{1ii}=\d_{1i}(\l_1)_i=0\quad\text{for all}\ i\in\Z_{[2,m]},
\\
b_i=\frac{u_{11i}}{\l_1}=\frac{u_{1i1}}{\l_1}=\frac{\d_{1i}(\l_1)_1}{\l_1}=0
\quad\text{for all}\ i\in\Z_{[1,m]}.
\end{gather*}
Since $(\s_k/\s_{k-2})^{1/2}$ is concave, we have
\[
\p_\xi^2F(\l)
=2F^{1/2}\p_\xi^2(F^{1/2})
+\frac{(DF\cdot\xi)^2}{2F}
\leq0
\quad\text{for}\ F=\frac{\s_k}{\s_{k-2}}.
\]
Moreover, $\s_k/\s_{k-1}$ is also concave.
Thus $-\sum_{i,j=1}^nF_{ij}\xi_i\xi_j\geq0$ and therefore
\begin{align*}
F^{ij}b_{ij}
&\geq2\sum_{p>m}\frac{F_p u_{11p}^2}{\l_1(\l_1-\l_p)}-\sum_{i>m}\frac{F_i u_{11i}^2}{\l_1^2}
=\sum_{i>m}\frb{\frac{2\l_1}{\l_1-\l_i}-1}F_ib_i^2
\\
&\geq\sum_{i>m}\frb{\frac{2\l_1}{\l_1+(n-k)\l_1/k}-1}F_ib_i^2
=\frb{\frac{2k}{n}-1}\sum_{i,j=1}^nF^{ij}b_ib_j
\\
&\geq\d\sum_{i,j=1}^nF^{ij}b_ib_j,
\end{align*}
where the second inequality follows from $\l_i\geq-(n-k)\l_1/k$ for all $i\in\Z_{[2,n]}$.

\textit{Case 2. $m=1$.}\
By \eqref{eqn.lambda-max-derivative},
it follows that $\xi_1=u_{111}=(\l_1)_1=\l_1b_1$.
Since $\l_1\geq K$,
the concavity inequality (Lemma \ref{lem.concavity-quotient}) gives
\[
-\sum_{i,j=1}^nF_{ij}\xi_i\xi_j
+\sum_{i>m}\frac{2F_i\xi_i^2}{\l_1-\l_i}
\geq(1+\d)\frac{F_1\xi_1^2}{\l_1}.
\]
Consequently,
\begin{align*}
F^{ij}b_{ij}
&\geq(1+\d)F_1b_1^2
+2\sum_{i>1}\frac{\l_1F_i}{\l_1-\l_i}b_i^2
-\sum_{i=1}^nF_i b_i^2
\\
&=\d F_1b_1^2+\sum_{i>1}\frb{\frac{2\l_1}{\l_1-\l_i}-1}F_i b_i^2
\geq\d \sum_{i,j=1}^nF^{ij}b_ib_j.
\end{align*}

Finally, since $a=e^{\d b}$, we have
\[
F^{ij}a_{ij}
=\d a F^{ij}b_{ij}+\d^2aF^{ij}b_ib_j
\geq2\d^2aF^{ij}b_i b_j
=2a^{-1}F^{ij}a_i a_j.
\]
This completes the proof.
\end{proof}

\medskip

Combining the Jacobi inequality
with Lemma \ref{lem.Fj-lambdaj},
we obtain the following doubling inequality.

\begin{lemma}[Doubling inequality]\label{lem.doubling}
Suppose that $u\in C^4(B_1)$ is a $k$-convex solution of $F(D^2u)=1$ in $B_1$.
Then for any $y\in B_{1/2}$ and any $r\in(0,1/8)$,
we have
\begin{equation}\label{eqn.doubling}
\sup_{B_{2r}(y)}\l_{\max}(D^2u)
\leq C\frb{n,k,r,\|u\|_{C^1(B_1)}}\sup_{B_r(y)}\l_{\max}(D^2u).
\end{equation}
\end{lemma}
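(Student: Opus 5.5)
We follow the strategy outlined in the introduction. Fix $y\in B_{1/2}$, $r\in(0,1/8)$, and set
\[
\vp_y(x):=(x-y)\cdot Du(x)-u(x)+u(y)+\frac{\a}{2}|x-y|^2-\b|Du(x)|^2,
\qquad
\psi_y:=e^{(c_y-\vp_y)/\g}-1,
\]
with parameters $\a,\b,\g,c_y$ depending on $n,k,r,\|u\|_{C^1(B_1)}$.

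\textbf{Step 1: geometry of the cutoff.} Since $\vp_y(y)=-\b|Du(y)|^2$ and $|(x-y)\cdot Du-u+u(y)|\le 2|x-y|\,\|u\|_{C^1}$, choose $\b$ small relative to $r^2\a$ and $\a$ large relative to $\|u\|_{C^1}/r$, say $\a\sim C\|u\|_{C^1}/r$. Then $\vp_y\ge \a|x-y|^2/4-C\b$ is large on $\p B_{4r}(y)$, while $\vp_y\le 3\a r^2$ on $B_{2r}(y)$. Picking $c_y$ between these values, the component $\Om_y$ of $\{\vp_y<c_y\}$ containing $B_{2r}(y)$ satisfies $\Om_y\Subset B_{4r}(y)$. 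Moreover $\psi_y\ge c_0>0$ on $\ol{B_{2r}(y)}$, $\psi_y\le C_0$ on $\Om_y$, and $\psi_y=0$ on $\p\Om_y$.

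\textbf{Step 2: reduction to a maximum point.} Let $a=\l_{\max}^\d$ and let $x_0$ maximize $a\psi_y$ over $\ol{\Om_y}$; then $x_0$ is interior. If $x_0\in\ol{B_r(y)}$ or $\l_{\max}(D^2u(x_0))<K'$ for a large $K'$ to be fixed, then
\[
c_0\sup_{B_{2r}}a\le C_0\max\set{K'^\d,\ \sup_{B_r}a}.
\]
Since $F(D^2u)=1$ and $\l\in\G_k$ force $\l_{\max}\ge c(n,k)>0$, this yields \eqref{eqn.doubling} after taking $1/\d$ powers. It therefore suffices to exclude $x_0\notin \ol{B_r(y)}$ with $\l_{\max}(x_0)\ge K'\ge K$.

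At such an $x_0$, use $a$ touched from below by a smooth function in the viscosity sense. The first-order condition gives $a_i/a=-(\psi_y)_i/\psi_y$, and the second-order condition is $F^{ij}(a\psi_y)_{ij}\le0$. Expanding and inserting Lemma \ref{lem.Jacobi},
\[
0\ge \psi F^{ij}a_{ij}+2F^{ij}a_i\psi_j+aF^{ij}\psi_{ij}
\ge 2a^{-1}\psi F^{ij}a_ia_j-2a\psi^{-1}F^{ij}\psi_i\psi_j+aF^{ij}\psi_{ij}
= aF^{ij}\psi_{ij},
\]
because $F^{ij}a_ia_j=a^2\psi^{-2}F^{ij}\psi_i\psi_j$. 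Hence $F^{ij}(\psi_y)_{ij}(x_0)\le0$.

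\textbf{Step 3: contradiction.} In diagonal coordinates at $x_0$,
\[
F^{ij}(\psi_y)_{ij}=\frac{e^{(c_y-\vp)/\g}}{\g}\Big(\g^{-1}F_i\vp_i^2-F_i\vp_{ii}\Big).
\]
Using $\sum_iF_iu_{iij}=0$ and $\sum_iF_i\l_i=(k-\ell)F$, we compute
\[
\vp_i=(x-y)_i\l_i+\a(x-y)_i-2\b\l_iu_i,
\qquad
\sum_iF_i\vp_{ii}=(k-\ell)+\a\sum_iF_i-2\b\sum_iF_i\l_i^2 .
\]
So the $-\b|Du|^2$ term supplies the positive quantity $2\b\sum_iF_i\l_i^2$, which needs no help from $\g$. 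We must show
\[
\g^{-1}\sum_iF_i\vp_i^2+2\b\sum_iF_i\l_i^2>\a\sum_iF_i+(k-\ell).
\]
The term $k-\ell$ is harmless: $\sum_iF_i\ge c(n,k)$ by \eqref{eqn.Fi-sum} together with Lemma \ref{lem.Fj-lambdaj}.

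By \eqref{eqn.Fj-lambdaj>} applied to $j=1$, $F_1(1+\l_1^2)\ge c_1\sum_iF_i$, so $2\b F_1\l_1^2\gtrsim \b\sum_iF_i$ when $\l_1\ge K'$. However, $\b$ is small compared with $\a$, so this alone does not beat $\a\sum_iF_i$. The $\g^{-1}$ term must therefore carry the estimate: we need some $i$ with $F_i\gtrsim\sum_jF_j$ and $|\vp_i|\gtrsim \a|x-y|\ge\a r$.

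For indices with $\l_i\ge0$, the contributions $(x-y)_i(\l_i+\a)$ and $-2\b\l_iu_i$ could cancel. For $\l_i<0$, \eqref{eqn.Fj>} gives $F_i\ge c_2\sum_jF_j$, which controls that part.

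This is the step we expect to be the main obstacle: ruling out cancellation across all directions. We would split according to whether $|(x-y)_i|$ is comparable to $|x-y|$ for some $i$ with either $\l_i<0$ or $F_i(1+\l_i^2)$ large.
\begin{itemize}
\item If $\l_i$ is large, the $\b F_i\l_i^2$ term absorbs everything once $\g$ is chosen after $\b$.
\item If $\l_i$ is bounded, then $\vp_i\approx(x-y)_i(\l_i+\a)$, since $\b$ is tiny. Because $\l_i>-(n-k)\l_1/k$ and $\a$ can be taken larger than any bounded negative $\l_i$, we get $|\vp_i|\gtrsim\a|(x-y)_i|$. Combined with $F_i(1+\l_i^2)\gtrsim\sum_jF_j$, this gives a $\g^{-1}\a^2r^2\sum_jF_j$ contribution.
\end{itemize}
Choosing $\g$ small then yields the strict inequality, a contradiction.
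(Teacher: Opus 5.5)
Your test function, the cutoff $\psi_y$, and the reduction at a bad maximum point to
\[
Q:=\frac1\g\sum_iF_i(\vp_y)_i^2+2\b\sum_iF_i\l_i^2\le(k-\ell)+\a\sum_iF_i
\]
all match the paper. However, Step 3 does not close. The missing ingredient is the order in which $\b$ and $\a$ are chosen.

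For $\l_j\ge0$, the term $2\b F_j\l_j^2$ cannot ``absorb everything''. Indeed, \eqref{eqn.Fj-lambdaj>} only gives $F_j\l_j^2\gtrsim c_1\sum_iF_i$, which does not grow with $\l_j$, and by your own choice $\b\ll\a$. What works there is the $\g^{-1}$-term:
\begin{itemize}
\item take $j$ with $|x_j-y_j|\ge r/\sqrt n$; no further splitting is needed, since \eqref{eqn.Fj-lambdaj>} holds for every index;
\item write $(\vp_y)_j=z_j\l_j+\a(x_j-y_j)$ with $z_j:=x_j-y_j-2\b u_j$;
\item take $\b=r/(16M\sqrt n)$, where $M:=\norm{u}_{C^1(B_1)}+1$, so that $(x_j-y_j)z_j\ge\frac78(x_j-y_j)^2$.
\end{itemize}
Then the two terms of $(\vp_y)_j$ have the same sign and $\g^{-1}F_j(\vp_y)_j^2\gtrsim\g^{-1}n^{-1}r^2c_1\sum_iF_i$. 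Note that this forces an \emph{upper} bound $\b\lesssim r/(M\sqrt n)$.

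For $\l_j<0$, your two bullets leave a hole. At $\l_j=-\a(x_j-y_j)/z_j\approx-\a$, which nothing excludes since $\l_1$ is only known to be large, one has $(\vp_y)_j=0$ exactly. So shrinking $\g$ is useless there, and the only remaining contribution is $2\b F_j\l_j^2\approx2\b\a^2F_j$. This is $\ge2\b\a^2c_2\sum_iF_i$ by \eqref{eqn.Fj>}. But since $F_j\le\sum_iF_i$, it can exceed $\a\sum_iF_i$ only if $\a\b\gtrsim1$, and it is guaranteed to once $\a\b c_2\gtrsim1$.

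Equivalently, your ``bounded'' bullet needs $|\l_j|\le c\a$ for a small $c$, and your ``large'' bullet needs $|\l_j|\gtrsim\sqrt{\a/(\b c_2)}$. These cover all negative $\l_j$ only when $\a\gtrsim1/(\b c_2)$. Your hierarchy ($\a\sim C\norm{u}_{C^1}/r$ first, then $\b$ tiny, then $\g$) never imposes this coupling. Combined with the forced $\b\lesssim r/(M\sqrt n)$, it gives $\a\b\lesssim C/\sqrt n$, so $C$ would have to be $\gtrsim\sqrt n/c_2$, which your argument never asks for. The claim that the $\b$-term wins ``once $\g$ is chosen after $\b$'' is incorrect, because $\g$ plays no role in that regime.

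The paper chooses the parameters in this order:
\begin{enumerate}
\item $\b=r/(16M\sqrt n)$, as above;
\item $\a\ge\max\set{1,c_*,8/(\b c_2)}$, where $k-\ell\le c_*\sum_iF_i$ by concavity of $F^{1/(k-\ell)}$, together with the geometric condition $6r^2\a>M^2\b+12Mr+2$;
\item $\g$ small, with $2\b\g\le r^2/n$.
\end{enumerate}
It then handles every negative $\l_j$ at once by minimizing over $t=\l_j$:
\[
\frac1\g\frb{z_jt+\a(x_j-y_j)}^2+2\b t^2\ge\frac{2\b\a^2(x_j-y_j)^2}{z_j^2+2\b\g}\ge\frac12\b\a^2.
\]
Together with \eqref{eqn.Fj>}, this gives $Q\ge\frac12\b\a^2c_2\sum_iF_i\ge4\a\sum_iF_i>(\a+c_*)\sum_iF_i$, which is the desired contradiction.
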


\begin{proof}
Suppose that $\d=\d(n,k)>0$ and $K=K(n,k)>1$ are the constants in Lemma \ref{lem.Jacobi}.
Fix $y\in B_{1/2}$ and $r\in(0,1/8)$, so that $B_{4r}(y)\Subset B_1$.
Consider the auxiliary function
\[
\vp_y(x):=(x-y)\cdot Du(x)-u(x)+u(y)+\frac{\a}{2}|x-y|^2-\b\abs{Du}^2,
\]
where $\a=\a(n,k,r,M)\geq1$ and $\b=\b(n,r,M)>0$ will be chosen later.
Let $M:=\|u\|_{C^1(B_1)}+1$.
The gradient bound gives
\begin{align*}
\abs{(x-y)\cdot Du(x)-u(x)+u(y)}\leq2M|x-y|\quad\ \text{for any}\ x\in B_1.
\end{align*}
Then
\begin{gather*}
\vp_y\geq-8Mr-\b M^2\quad\text{in}\ B_{4r}(y),\quad
a_y:=\sup_{\ol{B_{2r}(y)}}\vp_y
\leq4Mr+2\a r^2.
\end{gather*}
Choosing $\a$ such that $6r^2\a-M^2\b-12Mr-2>0$,
we have
\[
\inf_{\p B_{4r}(y)}\vp_y
\geq-8Mr+\frac{\a}{2}(4r)^2-\b M^2
>4Mr+2\a r^2+2
\geq a_y+2=:c_y.
\]
Then
\[
\vp_y<c_y
\quad\text{in}\ \ol{B_{2r}(y)},
\quad
\vp_y>c_y
\quad\text{on}\ \p B_{4r}(y).
\]
Thus the connected component $\Om_y$ of
$\{x\in B_{4r}(y):\vp_y(x)<c_y\}$
containing $B_{2r}(y)$ satisfies
\begin{align*}
\ol{B_{2r}(y)}\subset\Om_y\Subset B_{4r}(y).
\end{align*}

Next, define the associated exponential auxiliary function
\begin{align*}
\psi_y:=e^{(c_y-\vp_y)/\g}-1
\quad\text{in}\ \Om_y,
\end{align*}
where $\g=\g(n,k,r,M)>0$ will be chosen later.
Then $\psi_y>0$ in $\Om_y$ and $\psi_y=0$ on $\p\Om_y$.
Moreover,
\begin{gather}
\psi_y\geq e^{1/\g}-1=:c_0>0
\quad\text{in}\ \ol{B_{2r}(y)}.
\label{eqn.psi-lower}
\\
\psi_y\leq
\exp\frb{\frac{12Mr+2\a r^2+\b M^2+2}{\g}}-1
=:C_0
\quad\text{in}\ \Om_y.
\label{eqn.psi-upper}
\end{gather}
Let $a:=\l_{\max}(D^2u)^\d$,
and let $x_0\in\Om_y$ be a maximum point of $a\psi_y$ in $\ol{\Om_y}$.
We claim that
\[
x_0\in\ol{B_r(y)}
\quad\text{or}\quad
\l_{\max}(D^2u(x_0))<K.
\]
Suppose to the contrary that $|x_0-y|>r$ and $\l_{\max}(D^2u(x_0))\geq K$.
All subsequent calculations are at $x_0$.
Since $x_0$ is a maximum point,
we have
\[
0=(a\psi_y)_i=a_i\psi_y+a(\psi_y)_i,
\quad
0\geq(a\psi_y)_{ij}.
\]
By the Jacobi inequality (Lemma \ref{lem.Jacobi}), it follows that
\begin{align}
0\geq F^{ij}(a\psi_y)_{ij}
&=\psi_yF^{ij}a_{ij}+aF^{ij}(\psi_y)_{ij}+2F^{ij}a_i(\psi_y)_j
\nonumber\\
&\geq\psi_y\frac{2}{a}F^{ij}a_ia_j
+aF^{ij}(\psi_y)_{ij}-2F^{ij}a_i\frac{a_j\psi_y}{a}
\nonumber\\
&=aF^{ij}(\psi_y)_{ij}.
\label{eqn.Fij-psi<0}
\end{align}
On the other hand,
after a rotation of coordinates, we may assume that
\[
D^2u(x_0)
=\op{diag}(\l_1,\l_2,\ldots,\l_n),
\quad
\l_1\geq\l_2\geq\cdots\geq\l_n.
\]
Using $F=1$ and $\sum_{i,j=1}^nF^{ij}u_{ijl}=\p_{x_l}F=0$ for all $l\in\Z_{[1,n]}$,
we compute
\begin{align*}
(\vp_y)_i
&=\frb{x_i-y_i-2\b u_i}\l_i+\a(x_i-y_i),
\\
(\vp_y)_{ij}
&=u_{ij}+\sum_{l=1}^n(x_l-y_l)u_{ijl}+\a\d_{ij}
-2\b\sum_{l=1}^n u_{lj}u_{li}
-2\b\sum_{l=1}^nu_lu_{ijl},
\\
F^{ij}(\vp_y)_{ij}
&=k-\ell+\a\sum_{i=1}^nF_i(\l)-2\b\sum_{i=1}^nF_i\l_i^2,
\\
F^{ij}(\psi_y)_{ij}
&=\frac{\psi_y+1}{\g}
\frb{\frac{1}{\g}\sum_{i=1}^nF_i(\vp_y)_i^2
+2\b\sum_{i=1}^nF_i\l_i^2-(k-\ell)-\a\sum_{i=1}^nF_i}.
\end{align*}
Together with \eqref{eqn.Fij-psi<0}, we obtain
\[
Q:=\frac{1}{\g}\sum_{i=1}^nF_i(\vp_y)_i^2+2\b\sum_{i=1}^nF_i\l_i^2
\leq(k-\ell)+\a\sum_{i=1}^nF_i.
\]
Furthermore, set $\mathds 1:=(1,1,\dots,1)\in\R^n$.
For $F=\frac{\s_k}{\s_\ell}$ with $k-\ell\in\set{1,2}$,
since $F^{1/(k-\ell)}$ is concave and homogeneous of degree one in $\G_k$,
we have
\[
\frac{1}{c_*}:=F^{1/(k-\ell)}(\mathds1)
\leq F^{1/(k-\ell)}(\l)+DF^{1/(k-\ell)}(\l)\cdot(\mathds1-\l)
=\sum_{i=1}^nF^{1/(k-\ell)}_i(\l)
=\frac{1}{k-\ell}\sum_{i=1}^nF_i.
\]
Thus,
\begin{equation}\label{eqn.Q<Fi}
Q\leq\frb{\a+c_*}\sum_{i=1}^nF_i
=:\a_1\sum_{i=1}^nF_i.
\end{equation}

Since $|x-y|\geq r$, there exists $j\in\Z_{[1,n]}$ such that
\[
|x_j-y_j|\geq|x-y|/\sqrt n\geq r/\sqrt n.
\]
Set $z_j:=x_j-y_j-2\b u_j$.
Then $(\vp_y)_j=z_j\l_j+\a(x_j-y_j)$.
Choosing $\b:=\frac{r}{16M\sqrt n}$, we obtain
\[
2\b|u_j|
\leq2\b M
=\frac{r}{8\sqrt n}
\leq\frac18|x_j-y_j|,
\]
and therefore,
\begin{gather}
\frb{x_j-y_j}z_j
=(x_j-y_j)^2-2\b\frb{x_j-y_j}u_j
\geq\frac{7}{8}|x_j-y_j|^2>0,
\label{eqn.zj-bound}\\
\frac{7}{8}|x_j-y_j|
\leq|z_j|
\leq\frac{9}{8}|x_j-y_j|.
\nonumber
\end{gather}
Next, we distinguish two cases.

\textit{Case 1. $\l_j\geq0$}.
By \eqref{eqn.zj-bound},
the two terms $z_j\l_j$ and $\a(x_j-y_j)$ have the same sign.
By $\a\geq1$ and Lemma \ref{lem.Fj-lambdaj},
there exists $c_1=c_1(n,k)>0$ such that
\[
Q
\geq\frac{F_j}{\g}(\vp_y)_j^2
\geq\frac{F_j}{\g}\frb{\frac{7}{8}\l_j+\a}^2|x_j-y_j|^2
\geq\frac{49r^2}{64n\g}F_j(1+\l_j^2)
\geq\frac{49r^2c_1}{64n\g}\sum_{i=1}^nF_i.
\]
We may choose $\g<\frac{49r^2c_1}{64n\a_1}$
such that $Q>\a_1\sum_{i=1}^nF_i$,
a contradiction to \eqref{eqn.Q<Fi}.

\textit{Case 2. $\l_j<0$}.
Take $\g$ with $\g\leq\frac{r^2}{2\b n}=\frac{8Mr}{\sqrt n}$.
Then $2\b\g\leq(x_j-y_j)^2$.
By Lemma \ref{lem.Fj-lambdaj},
there exists $c_2=c_2(n,k)>0$ such that
\begin{align*}
Q
\geq\frac{1}{\g}F_j(\vp_y)_j^2+2\b F_j\l_j^2
\geq\frac{2\b\a^2(x_j-y_j)^2}{z_j^2+2\b\g}F_j
\geq\frac{1}{2}\b\a^2F_j
\geq\frac{1}{2}\b\a^2 c_2\sum_{i=1}^nF_i.
\end{align*}
Choose $\a\geq\max\set{1,c_*,\frac{8}{\b c_2}}$.
Then $Q>\a_1\sum_{i=1}^nF_i$, a contradiction to \eqref{eqn.Q<Fi}.
The claim is proved.

Now, using the lower bound \eqref{eqn.psi-lower} of $\psi_y$, we have
\begin{align*}
c_0\max_{\ol{B_{2r(y)}}}a
\leq\max_{\ol{B_{2r(y)}}}a\psi_y
\leq\max_{\ol{\Om_y}}a\psi_y
\leq a(x_0)\psi_y(x_0).
\end{align*}
If $x_0\in\ol{B_r(y)}$,
then by the upper bound \eqref{eqn.psi-upper} of $\psi_y$,
we have $a(x_0)\psi_y(x_0)\leq C_0\max_{\ol{B_r(y)}}a$.
If $\l_{\max}(x_0)\leq K$, then $a(x_0)\psi_y(x_0)\leq C_0K^\d$.
Thus,
\[
\max_{\ol{B_{2r}(y)}}a\leq\frac{C_0}{c_0}\max\set{\max_{\ol{B_r(y)}}a,K^\d}.
\]
Furthermore, since
\[
1=F(\l)\leq F(\l_1\mathds 1)=\l_{\max}^{k-\ell} F(\mathds1),
\]
we have $\l_{\max}\geq c_*>0$.
Thus
\[
\sup_{B_{2r}(y)}\l_{\max}
\leq\frb{\frac{C_0}{c_0}}^{1/\d}\max\set{1,\frac{K}{c_*}}\sup_{B_r(y)}\l_{\max},
\]
which proves the desired estimate.
\end{proof}

\section{Interior Hessian estimates}

In this section, we complete the proof of
Theorem \ref{thm.Hessian-estimate} by a compactness argument.
We first introduce the following Alexandrov-type result,
which ensures that 
locally uniform limits of smooth $k$-convex solutions of \eqref{eqn.sigma-quotient} 
admit second-order expansions almost everywhere.
Chen's gradient estimate \cite{C15} allows us to
follow the argument of Shankar--Yuan \cite[Section 4]{SY25},
as in Fung \cite[Proposition 3.1]{F26b},
without imposing the restriction $2k>n$.
When $2k>n$, this conclusion already follows from
the Alexandrov-type theorem of Chaudhuri--Trudinger \cite[Theorem 1.1]{CT05},
which applies to general $k$-convex functions.

\begin{lemma}[Alexandrov-type differentiability]
\label{lemma.alexandrov-quotient}
Let $1\leq\ell<k\leq n$ and let $\Om\subset\R^n$ be open.
Suppose that $u_j\in C^3(\Om)$ are $k$-convex solutions 
of $\frac{\s_k(D^2u_j)}{\s_\ell(D^2u_j)}=1$ in $\Om$,
and that $u_j$ converges locally uniformly to $u$ in $\Om$.
Then $u$ is twice differentiable almost everywhere in $\Om$;
that is, for almost every $x\in\Om$, 
there is a quadratic polynomial $P$ such that
\[
\sup_{y\in B_r(x)}|u(y)-P(y)|=o(r^2).
\]
\end{lemma}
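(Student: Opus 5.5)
The plan is to follow the Shankar--Yuan route (\cite[Section 4]{SY25}, as adapted in \cite[Proposition 3.1]{F26b}), with Chen's gradient estimate \cite{C15} as the source of uniform control. The idea is to show that $u$ is semiconcave up to a singular part controlled by a measure, and then to invoke an Alexandrov-type theorem for such functions.

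\emph{Step 1 (uniform Lipschitz bounds).} Fix $\Om'\Subset\Om''\Subset\Om$. Local uniform convergence gives $\sup_j\|u_j\|_{L^\infty(\Om'')}<\infty$. Chen's interior gradient estimate \cite{C15} then gives $\sup_j\|Du_j\|_{L^\infty(\Om')}\leq C$. Hence $u$ is locally Lipschitz, and after passing to a subsequence $Du_j\to Du$ weakly-$*$.

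\emph{Step 2 (one-sided Hessian control).} Since $\l(D^2u_j)\in\G_k\subset\G_1$, we have $\t u_j>0$. This makes each $u_j$ subharmonic, with $\int_{\Om'}\t u_j\,\vp\leq C\|\vp\|_{C^2}$ uniformly, so $D^2u_j$ has uniformly bounded total variation locally. Indeed, the positive part of the trace bounds $|D^2u_j|$ via Lemma \ref{eqn.lambda-i-lower}: $|\l_i|\leq C\l_{\max}\leq C\t u_j$ in $\G_k$. Thus $D^2u$ is a matrix-valued Radon measure, and $u\in BV^2_{loc}$, that is, $Du\in BV_{loc}$. Next I would use the Calder\'on--Zygmund theorem for $BV^2$ (or $W^{2,1}$-type) functions: a function whose gradient is $BV$ has an approximate second-order Taylor expansion at a.e.\ point, with $P$ built from the absolutely continuous part of $D^2u$.

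\emph{Step 3 (from approximate to genuine expansion).} This is the main obstacle: upgrading the $L^1$-approximate expansion to the $\sup$-norm $o(r^2)$ statement. Here I would use the equation. Subharmonicity of $u$ (as a limit of subharmonic functions) gives one-sided control of $u-P$ from above via mean-value inequalities, namely $u(z)\leq \fint_{B_\rho(z)}u$. Control from below would come from the fact that, in $\G_k$ with $F=1$, each $u_j$ is also a viscosity supersolution of a uniformly elliptic-type inequality in the small-Hessian regime. More robustly, I would follow \cite{SY25}: at a Lebesgue point where the rescalings $u_{x,r}(z):=r^{-2}(u(x+rz)-P(x+rz))$ tend to zero in $L^1(B_1)$, use that the rescalings are limits of solutions to the same (scale-invariant) equation with uniformly bounded Laplacian mass. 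Subharmonicity then gives an $L^\infty$ upper bound $\sup_{B_{1/2}}u_{x,r}\leq C\|u_{x,r}^+\|_{L^1(B_1)}\to0$. For the lower bound, combine the $L^1$ smallness with the fact that $\t u_{x,r}$ has small mass near density points of the absolutely continuous part (the singular part has density zero a.e.). The Green representation $u_{x,r}=h-G*\t u_{x,r}$ then yields $\inf u_{x,r}\to0$ as well.

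If Step 3 becomes delicate, when $2k>n$ I would fall back on the cited Chaudhuri--Trudinger theorem \cite[Theorem 1.1]{CT05}. For this, one checks that $u$ is $k$-convex in the viscosity sense, which holds because the $C^2$ functions $u_j$ are $k$-convex and viscosity $k$-convexity is closed under local uniform limits. That theorem then directly gives twice differentiability a.e.
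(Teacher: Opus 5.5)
Your Steps 1 and 2 match the paper. Chen's estimate \cite{C15} gives local Lipschitz bounds, and $D^2u$ is a matrix of Radon measures. (Your bound $|D^2u_j|\le\t u_j$ is correct, but it comes from $\G_k\subset\G_2$, i.e.\ $|\l|^2=\s_1^2-2\s_2<\s_1^2$, not from Lemma~\ref{eqn.lambda-i-lower}.) One then gets the $L^1$-approximate expansion $\frac{1}{|B_r|}\int_{B_r(x)}|u-P|=o(r^2)$ a.e., which the paper takes from \cite[Lemma 4.1]{SY25}.

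\textbf{The gap is in Step 3, in the lower bound for $u_{x,r}$.} Small total variation of $\t u_{x,r}$ on $B_1$ does not make its Green potential small in $L^\infty$. A positive mass $m$ spread on a sphere of radius $\rho$ produces a dip of depth comparable to $m\rho^{2-n}$ (or $m|\log\rho|$ if $n=2$). This stays of order one as $m\to0$ if $\rho\to0$ fast enough. Nothing in your argument prevents $\t u$ from having such concentrations inside $B_r(x)$ at every scale, even at points where its singular part has zero density. The Lipschitz bound of Step 1 does not exclude this, because it is not scale-invariant: it only gives $|Du_{x,r}|\lesssim 1/r$. There is also no uniformly elliptic supersolution inequality to fall back on, since $D^2u_j$ is not small and the operator is not uniformly elliptic on $\G_k$.

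Your upper bound is also misstated. $u_{x,r}$ is not subharmonic, since $\t u_{x,r}=(\t u)(x+r\,\cdot)-\op{tr}D^2P$ and $\op{tr}D^2P>0$ in general. This part is repairable: the mean value inequality on balls of radius $\rho$ gives $\sup_{B_{1/2}}u_{x,r}\le C\rho^{-n}\|u_{x,r}\|_{L^1(B_1)}+C\rho^2$. The lower bound is not repairable this way.

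\textbf{The missing ingredient} is a Lipschitz bound that is scale-invariant, invariant under subtracting affine functions, and controlled by an $L^1$ norm. The paper applies Chen's bound $|Du_j(z)|\le C\op{osc}_{B_\rho(z)}u_j/\rho$ to $u_j-L$, which is still a solution. It then interpolates against $\int|u_j|$ as in \cite{TW99}, yielding $\sup d_{x,y}^{n+1}|u(x)-u(y)|/|x-y|\le C\int_{B_r(z)}|u|$, with the same constant for $u-L$.

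Take $L$ to be the affine part of $P$. Then $\int_{B_r(x)}|u-L|=O(r^{n+2})$, so $u-P$ is Lipschitz with constant $O(r)$ on $B_{r/2}(x)$. A deviation $|u-P|\ge\ve r^2$ then persists on a ball of radius $c\ve r$, contradicting $\int_{B_r(x)}|u-P|=o(r^{n+2})$. This is exactly what rules out the dips, and it is where the equation, not merely $k$-convexity, enters.

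Your fallback to \cite[Theorem 1.1]{CT05} is valid, but only for $2k>n$, as the paper also remarks. That suffices for Theorem~\ref{thm.Hessian-estimate}. It does not cover the lemma as stated, which allows $2k\le n$, precisely the range where $k$-convexity alone gives no such control.
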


\begin{proof}
By Chen's gradient estimate \cite[Theorem 1.1]{C15}, we have
\[
|Du_j(z)|\leq C(n,k,\ell)\frac{\op{osc}_{B_\rho(z)}u_j}{\rho}\quad
\text{whenever}\ B_\rho(z)\Subset\Om.
\]
Since $u_j$ converges locally uniformly to $u$,
this estimate gives locally uniform bounds for $Du_j$.
Hence $u$ is locally Lipschitz. 
Moreover, by Rademacher's theorem,
$u$ is differentiable almost everywhere in $\Om$.
Following the interpolation argument in \cite[Lemma 2.6]{TW99} 
leading to \cite[Corollary 3.4]{TW99},
we obtain the weighted Lipschitz estimate
\[
\sup_{\substack{x,y\in B_r(z)\\x\neq y}}
d_{x,y}^{n+1}\frac{|u_j(x)-u_j(y)|}{|x-y|}
\leq C(n,k,\ell)\int_{B_r(z)}|u_j|
\]
for every ball $B_r(z)\Subset\Om$,
where $d_{x,y}:=\min\{d_x,d_y\}$ and $d_x:=\op{dist}(x,\partial B_r(z))$.
Passing to the locally uniform limit gives
\begin{equation}\label{eqn.alexandrov-lip}
\sup_{\substack{x,y\in B_r(z)\\x\neq y}}
d_{x,y}^{n+1}\frac{|u(x)-u(y)|}{|x-y|}
\leq C(n,k,\ell)\int_{B_r(z)}|u|.
\end{equation}
The estimate \eqref{eqn.alexandrov-lip} also holds for $u-L$
with the same constant $C(n,k,\ell)$ for every affine function $L$,
since subtracting $L$ leaves equation \eqref{eqn.sigma-quotient} unchanged.
Since $k\geq2$, the limit $u$ is $2$-convex in the viscosity sense.
By the argument of \cite[Theorem 2.4]{CT05},
its distributional Hessian $[D^2u]=[\mu^{ij}]$
is a symmetric matrix of locally finite signed Radon measures:
\[
\int_\Om u\,\varphi_{ij}\,dx
=\int_\Om\varphi\,d\mu^{ij}
\quad\text{for all }\varphi\in C_0^\infty(\Om).
\]
By Lebesgue differentiation, the conditions in
\cite[(4.2)--(4.4)]{SY25} hold almost everywhere in $\Om$.
Thus \cite[Lemma 4.1]{SY25} gives, for almost every $x\in\Om$,
a quadratic polynomial $P$ such that
\[
\frac{1}{|B_r|}\int_{B_r(x)}|u(y)-P(y)|\,dy=o(r^2).
\]
Using \eqref{eqn.alexandrov-lip}
and arguing as in \cite[proof of (4.6)]{SY25},
we obtain the corresponding estimate for the remainder $u-P$.
Finally, \cite[Lemma 4.2]{SY25} yields
\[
\sup_{y\in B_{r/2}(x)}|u(y)-P(y)|=o(r^2).
\]
Replacing $r$ by $2r$ completes the proof.
\end{proof}

\medskip

We now combine Lemma \ref{lemma.alexandrov-quotient}
with Savin's small perturbation theorem \cite{Sav07}
and the doubling inequality (Lemma \ref{lem.doubling})
to complete the proof of Theorem \ref{thm.Hessian-estimate}.

\begin{proof}[Proof of Theorem \ref{thm.Hessian-estimate}]
Suppose to the contrary that
there exists a sequence of
$k$-convex solutions $u_j\in C^4(B_1)$ of equation \eqref{eqn.sigma-quotient} 
such that $\norm{u_j}_{C^1(B_1)}\leq A$ and $\abs{D^2u_j(0)}\to+\infty$.
By the Arzel\`a--Ascoli theorem
and the closedness of viscosity solutions (cf. \cite[Lemma 6.1]{SY25}),
after passing to a subsequence,
$u_j$ converges uniformly to
a continuous viscosity solution $u$ of $F(D^2u)=1$ in $B_1$.
Moreover, since each $u_j$ is $k$-convex,
the limit $u$ is $k$-convex in the viscosity sense.
By Lemma \ref{lemma.alexandrov-quotient},
$u$ is twice differentiable almost everywhere in $B_1$.
Fix such a point $y\in B_{1/8}$ and let $P$ be a quadratic polynomial
such that
\[
u(y+z)=P(y+z)+o(|z|^2).
\]
Note that $\l(D^2P)\in\G_k$ and $F(D^2P)=1$.

\medskip

We next apply Savin's small perturbation theorem \cite{Sav07} to $v_j:=u_j-P$.
For $0<r<1/8$, we rescale near $y$ by
\[
v_{j,r}(z)
:=\frac{1}{r^2}\frb{u_j(y+rz)-P(y+rz)}
\quad\text{for any}\ z\in B_1.
\]
Then
\begin{align*}
\norm{v_{j,r}}_{L^\infty(B_1)}
&\leq\frac{1}{r^2}\frb{\norm{u_j-u}_{L^\infty(B_r(y))}+\norm{u-P}_{L^\infty(B_r(y))}}
\\
&\leq\frac{1}{r^2}\norm{u_j-u}_{L^\infty(B_r(y))}+\ve(r),
\end{align*}
for some modulus $\ve(r):=o(r^2)/r^2$.
Define a continuous operator on the space of real symmetric matrices by
\[
G(M):=
\begin{cases}
F(D^2P+M)-1, & \l(D^2P+M)\in\G_k,\\
-1, & \text{otherwise}.
\end{cases}
\]
The operator $G$ is degenerate elliptic, namely,
\[
G(M+N)\geq G(M)
\quad\text{for any}\ M\in\op{Sym(n)},\ N\geq0.
\]
Moreover, $G(0)=0$, and $G$ is smooth and uniformly elliptic
in a neighborhood of $0$.
Since $\l(D^2P+D^2v_{j,r})\in\G_k$,
we have
\[
G(D^2v_{j,r})=F(D^2P+D^2v_{j,r})-1=0
\quad\text{in}\ B_1.
\]
Fix $r=r(n,k,P,\ve)=:\rho\in(0,1/8)$ sufficiently small that $\ve(\rho)<c_1/2$,
where $c_1$ is the small constant in \cite[Theorem 1.3]{Sav07}.
Since $u_j$ converges uniformly to $u$,
$\|v_{j,\rho}\|_{L^\infty(B_1)}\leq c_1$
for all sufficiently large $j$.
Applying \cite[Theorem 1.3]{Sav07} and returning to the original variables, we obtain
\[
\norm{u_j-P}_{C^{2,\a}(B_{\rho/2}(y))}
\leq C(n,k,P,\ve),
\]
for all sufficiently large $j$,
where $\a=\a(n,k,P,\ve)\in(0,1)$.
This implies that
\[
\l_{\max}(D^2(u_j))\leq C(n,k,P,\ve)=:C_0
\quad\text{in}\ B_{\rho/2}(y).
\]

\medskip

Finally, choose points $y_0:=y$, $y_1,y_2,\dots,y_{N-1}$, $y_N:=0$
on the line segment from $y$ to $0$
such that $\abs{y_i-y_{i-1}}\leq\rho/2$ for all $i\in\Z_{[1,N]}$
and $N\leq1+\frac{1}{4\rho}$.
Then
\[
\set{y_i}_{i=0}^N\subset B_{1/4},
\quad
B_{\rho/2}(y_i)\subset B_\rho(y_{i-1})
\ \text{for all}\ i\in\Z_{[1,N]}.
\]
Repeated application of the doubling inequality (Lemma \ref{lem.doubling}) to $u_j$  gives
\[
\begin{aligned}
\l_{\max}(D^2u_j(0))
&\leq
\sup_{B_{\rho/2}(y_N)}\l_{\max}(D^2u_j)
\\
&\leq
C\frb{n,k,A,\rho}^N
\sup_{B_{\rho/2}(y)}\l_{\max}(D^2u_j)
\\
&\leq C\frb{n,k,A,\rho}C_0
\end{aligned}
\]
for all sufficiently large $j$.
Since $\G_k\subseteq\G_2$, we have
\[
\abs{D^2u_j}^2
=(\D u_j)^2-2\s_2(D^2u_j)
<(\D u_j)^2\leq (n\l_{\max}(D^2u_j))^2.
\]
Consequently,
\[
\abs{D^2u_j(0)}
\leq nC(n,k,A,\rho)C_0
\]
for all sufficiently large $j$,
contradicting $\abs{D^2u_j(0)}\ra+\infty$.
This completes the proof.
\end{proof}

\subsection*{Acknowledgments}

I am sincerely grateful to Professor Zhisu Li
for his longstanding guidance, encouragement and support
and for many helpful discussions of this subject.
I also thank Cheuk Yan Fung for his helpful suggestion to use
Chen's gradient estimate in the argument of Shankar--Yuan \cite{SY25}
to obtain the Alexandrov-type result in Lemma~\ref{lemma.alexandrov-quotient}
without the restriction $2k>n$.
This work was partially supported by NSFC 12171389 and NSFC 11801015.

\renewcommand\refname{References}


\begin{thebibliography}{W26}


\bibitem[BCD17]{BCD17}
S. Brendle, K. Choi, and P. Daskalopoulos,
Asymptotic behavior of flows by powers of the Gaussian curvature,
Acta Math., 219 (2017), 1--16.

\bibitem[C15]{C15}
C.-Q. Chen,
The interior gradient estimate of Hessian quotient equations,
J. Differential Equations, 259 (2015), 1014--1023.

\bibitem[CHO16]{CHO16}
C.-Q. Chen, F. Han, and Q.-Z. Ou,
The interior $C^2$ estimate for the Monge-Amp\`ere equation in dimension $n=2$,
Anal. PDE, 9 (2016), 1419--1432.

\bibitem[CJTZ26]{CJTZ26}
R.-S. Chen, H.-Y. Jian, X.-S. Tu, and X.-C. Zhou,
Regularity for convex viscosity solutions of sigma-2 equation,
arXiv:2605.30823 (2026).

\bibitem[CNS85]{CNS85}
L. Caffarelli, L. Nirenberg, and J. Spruck,
The Dirichlet problem for nonlinear second-order elliptic equations.
III. Functions of the eigenvalues of the Hessian,
Acta Math., 155 (1985), 261--301.

\bibitem[CW01]{CW01}
K.-S. Chou and X.-J. Wang,
A variational theory of the Hessian equation,
Comm. Pure Appl. Math., 54 no. 9 (2001), 1029--1064.

\bibitem[CWY09]{CWY09}
J.-Y. Chen, M. Warren, and Y. Yuan,
A priori estimate for convex solutions to special Lagrangian equations and its application,
Comm. Pure Appl. Math., 62 (2009), 583--595.

\bibitem[CT05]{CT05}
N. Chaudhuri and N. S. Trudinger,
An Alexandrov type theorem for $k$-convex functions,
Bull. Austral. Math. Soc., 71 (2005), 305--314.

\bibitem[CZZ26]{CZZ26}
R.-S. Chen, X.-C. Zhou and R.-X. Zhu,
Interior $C^{2,\a}$ regularity for the quadratic Hessian equation,
arXiv:2608.29484 (2026).

\bibitem[DXZ26]{DXZ26}
W.-S. Dong, S.-R. Xu and R.-J. Zhang,
Pogorelov interior estimates for sum-of-Hessians equations,
arXiv:2603.15345v2 (2026).

\bibitem[DZ26]{DZ26}
W.-S. Dong and R.-J. Zhang,
Interior Hessian estimates for Hessian quotient equations,
arXiv:2608.19087v2 (2026).

\bibitem[Fan26]{Fan26}
Z.-Y. Fan,
Hessian estimates for the sigma-$2$ equation
with variable right-hand side terms in dimension $4$,
Adv. Math., 494 (2026), Paper No. 110953.

\bibitem[F26a]{F26a}
C. Y. Fung,
Doubling argument of the Hessian estimate for the Hessian quotient equations,
arXiv:2607.21982 (2026).

\bibitem[F26b]{F26b}
C. Y. Fung,
Doubling argument of the Hessian estimate 
for the special Lagrangian equation 
on general phases with constraints,
J. Differential Equations, 482 (2026), 114687.

\bibitem[GQ19]{GQ19}
P.-F. Guan and G.-H. Qiu,
Interior $C^2$ regularity of convex solutions
to prescribing scalar curvature equations,
Duke Math. J., 168 (2019), 1641--1663.

\bibitem[GS26]{GS26}
P.-F. Guan and M. Sroka,
A special concavity property for positive Hessian quotient operators,
Discrete Contin. Dyn. Syst., 54 (2026), 50--60.

\bibitem[K87]{K87}
N. J. Korevaar,
A priori interior gradient bounds for solutions to elliptic Weingarten equations,
Ann. Inst. H. Poincar\'e Anal. Non Lin\'eaire, 4 no. 5 (1987), 405--421.

\bibitem[H59]{H59}
E. Heinz,
On elliptic Monge--Amp\`ere equations and Weyl's embedding problem,
J. Analyse Math., 7 (1959), 1--52.


\bibitem[JS26]{JS26}
H.-M. Jiao and Z.-A. Sui,
Interior Hessian estimates for Hessian quotient equations in dimension three,
arXiv:2602.14064 (2026).

\bibitem[Liu21]{Liu21}
J.-K. Liu,
Interior $C^2$ estimate for Monge--Amp\`ere equation in dimension two,
Proc. Amer. Math. Soc., 149 (2021), 2479--2486.

\bibitem[Lu23]{Lu23}
S.-Y. Lu,
Interior $C^2$ estimate for Hessian quotient equation in dimension three,
arXiv:2311.05835 (2023).

\bibitem[Lu25]{Lu25}
S.-Y. Lu,
Interior $C^2$ estimate for Hessian quotient equation in general dimension,
Ann. PDE, 11 (2025), Paper No. 17, 26 pp.

\bibitem[LT26]{LT26}
S.-Y. Lu and Y.-L. Tsai,
A note on interior $C^2$ estimate for general Hessian quotient equation,
Commun. Pure Appl. Anal., 33 (2026), 88--100.

\bibitem[LW26a]{LW26a}
Z.-S. Li and K. Wu,
A concavity inequality and interior $C^2$ estimate for Hessian quotient equations,
arXiv:2608.17405 (2026).

\bibitem[LW26b]{LW26b}
Z.-S. Li and K. Wu,
Interior Hessian estimates for the quadratic Hessian equation,
arXiv:2608.23233v2 (2026).

\bibitem[MSY19]{MSY19}
M. McGonagle, C. Song, and Y. Yuan,
Hessian estimates for convex solutions to quadratic Hessian equation,
Ann. Inst. H. Poincar\'e C Anal. Non Lin\'eaire,
36 (2019), 451--454.

\bibitem[M21]{M21}
C. Mooney,
Strict $2$-convexity of convex solutions
to the quadratic Hessian equation,
Proc. Amer. Math. Soc., 149 (2021), 2473--2477.

\bibitem[MY26]{MY26}
X.-Q. Mei and J. Yan,
Interior $C^2$ estimate for semi-convex solutions to a class of
Hessian quotient equations in arbitrary dimensions,
arXiv:2604.23349 (2026).

\bibitem[P78]{P78}
A. V. Pogorelov,
The Minkowski multidimensional problem,
Halsted Press, New York--Toronto--London, 1978.

\bibitem[Q24]{Q24}
G.-H. Qiu,
Interior Hessian estimates for $\s_2$ equations in dimension three,
Front. Math., 19 (2024), 577--598.

\bibitem[QY26]{QY26}
G.-H. Qiu and J. Yan,
Interior estimates for Hessian quotient equations,
arXiv:2609.24549 (2026).

\bibitem[RW23]{RW23}
C.-Y. Ren and Z.-Z. Wang,
The global curvature estimate for the $n-2$ Hessian equation,
Calc. Var. Partial Differential Equations,
62 (2023), Paper No. 239.

\bibitem[Sav07]{Sav07}
O. Savin,
Small perturbation solutions for elliptic equations,
Comm. Partial Differential Equations, 32 (2007), 557--578.


\bibitem[S26]{S26}
R. Shankar,
Hessian estimates for the special Lagrangian equation by doubling,
Anal. PDE, 19 (2026), 339--352.

\bibitem[SY20]{SY20}
R. Shankar and Y. Yuan,
Hessian estimate for semiconvex solutions to the sigma-$2$ equation,
Calc. Var. Partial Differential Equations,
59 (2020), Paper No. 30, 12 pp.


\bibitem[SY25]{SY25}
R. Shankar and Y. Yuan,
Hessian estimates for the sigma-$2$ equation in dimension four,
Ann. of Math., 201 (2025), 489--513.

\bibitem[T26]{T26}
Y.-L. Tsai,
A concavity inequality for Hessian quotient equations,
arXiv:2608.16383 (2026).

\bibitem[TW99]{TW99} 
N. S. Trudinger and X.-J. Wang,
Hessian measures II,
Ann. of Math., 150 (1999), 579--604.

\bibitem[U90]{U90}
J. I. E. Urbas,
On the existence of nonclassical solutions
for two classes of fully nonlinear elliptic equations,
Indiana Univ. Math. J., 39 (1990), 355--382.

\bibitem[WY09]{WY09}
M. Warren and Y. Yuan,
Hessian estimates for the sigma-$2$ equation in dimension $3$,
Comm. Pure Appl. Math., 62 (2009), 305--321.

\bibitem[WY14]{WY14}
D.-K. Wang and Y. Yuan,
Hessian estimates for special Lagrangian equations
with critical and supercritical phases in general dimensions,
Amer. J. Math., 136 (2014), 481--499.

\bibitem[Yan26]{Yan26}
J. Yan,
Global curvature estimates for $\s_k$ curvature equations with $k\geq n/2$,
arXiv:2608.25665 (2026).


\bibitem[Z24]{Z24}
X.-C. Zhou,
Notes on generalized special Lagrangian equation,
Calc. Var. Partial Differential Equations, 63 (2024), Paper No. 197, 28 pp.

\bibitem[ZZ26]{ZZ26}
X.-C. Zhou and R.-X. Zhu,
Interior $C^{2,\a}$ regularity for convex solutions of the $2$-Hessian equation,
arXiv:2608.24604 (2026).
\end{thebibliography}
\end{document}